\def\Ginclude@eps#1{%
 \message{<#1>}%
  \bgroup
  \def\@tempa{!}%
  \dimen@\Gin@req@width
  \dimen@ii.1bp%
  \divide\dimen@\dimen@ii
  \@tempdima\Gin@req@height
  \divide\@tempdima\dimen@ii
    \includegraphics{#1}%
  \egroup}
\newcommand{\CM}{\overline{\operatorname{\mathcal{M}}}}
\newcommand{\IM}{\operatorname{\mathcal{M}}}
\newcommand{\LL}{\operatorname{\mathcal{L}}}
\newcommand{\PP}{\operatorname{\mathfrak{P}}}
\newcommand{\Si}{\dot{S}}
\newcommand{\CR}{\bar{\partial}}
\newcommand{\Mph}{M_{\phi}}
\newcommand{\Aut}{\operatorname{Aut}}
\newcommand{\Symp}{\operatorname{Symp}}
\newcommand{\ev}{\operatorname{ev}}
\newcommand{\CZ}{\operatorname{CZ}}
\newcommand{\del}{\partial}
\newcommand{\Ju}{\underline{J}}
\newcommand{\IR}{\operatorname{\mathbb{R}}}
\newcommand{\IN}{\operatorname{\mathbb{N}}}
\newcommand{\ID}{\operatorname{\mathbf{D}}}
\newcommand{\Ih}{\operatorname{\mathbf{h}}}
\newcommand{\IP}{\operatorname{\mathbb{P}}}
\newtheorem{theorem}{Theorem}[section]
\newtheorem{definition}[theorem]{Definition}
\newtheorem{corollary}[theorem]{Corollary}
\newtheorem{rem}[theorem]{Remark}
\newtheorem{ex}[theorem]{Example}
\newenvironment{remark}{\begin{rem}\rm}{\qee\end{rem}}
\newenvironment{example}{\begin{ex}\rm}{\qee\end{ex}}
\newcommand{\qee}{\mbox{\hspace{0.2mm}}\hfill$\triangle$}
\title{Nijenhuis operator in contact homology and descendant recursion in symplectic field theory}
\author{Paolo Rossi}
\begin{document}

\begin{abstract}
In this paper we investigate the algebraic structure related to a new type of correlator associated to the moduli spaces of $S^1$-parametrized curves in contact homology and rational symplectic field theory. Such correlators are the natural generalization of the non-equivariant linearized contact homology differential (after Bourgeois-Oancea) and give rise to an invariant Nijenhuis (or hereditary) operator (\`a la Magri-Fuchssteiner) in contact homology which recovers the descendant theory from the primaries. We also sketch how such structure generalizes to the full SFT Poisson homology algebra to a (graded symmetric) bivector. The descendant hamiltonians satisfy to recursion relations, analogous to bihamiltonian recursion, with respect to the pair formed by the natural Poisson structure in SFT and such bivector. In case the target manifold is the product stable Hamiltonian structure $S^1\times M$, with $M$ a symplectic manifold,  the recursion coincides with genus $0$ topological recursion relations in the Gromov-Witten theory of $M$.
\end{abstract}

\maketitle

\tableofcontents

\markboth{P. Rossi}{Nijenhuis operator in contact homology}

\section*{Introduction}

Starting from the early nineties, Boris Dubrovin and many of his collaborators have studied the relation between Gromov-Witten theory and the theory of integrable systems of PDEs, which was first noticed by Witten in \cite{W}. The axioms of Frobenius manifold, cf. \cite{D}, encode all the properties that are satisfied by the algebraic structure generated by rational Gromov-Witten theory and that, in particular, generate an integrable Hamiltonian system of evolutionary PDEs. The structure of Frobenius manifold has proven to be central in many different areas of mathematics, from algebra to singularity theory, and provides for instance the most immediate approach to mirror symmetry.\\

One of the consequences of the axioms of a (homogenous) Frobenius manifold is that the associated Hamiltonian system of PDEs is actually bihamiltonian. Bihamiltonian structures where introduced by Magri in \cite{M1} in the analysis of the so-called Lenard scheme
(see e.g \cite{GGKM}) to construct the KdV integrals. They consist of a manifold endowed with two Poisson tensors $\Pi_1$ and $\Pi_2$, mutually compatible in the sense that their Schouten-Nijenhuis bracket $[\Pi_1,\Pi_2]$ vanishes. Under the condition that the Poisson pencil $\Pi_\lambda=\Pi_2 - \lambda \Pi_1$ (a one-parametric family of Poisson tensors) has constant co-rank there exists a simple recursive procedure for constructing a sequence of commuting integrals for both Poisson structures (see also \cite{DZ} and the author's survey \cite{R2}).

\begin{theorem}[\cite{M1}]
Let P be a manifold endowed with compatible Poisson tensors $\Pi_1$, $\Pi_2$ and associated Poisson brackets $\{\, \cdot\, ,\, \cdot\, \}_{1}$, $\{\, \cdot\, ,\, \cdot\, \}_{2}$. Let $k=\mathrm{corank}\, \Pi_1=\mathrm{corank}\, (\Pi_1 +\epsilon \Pi_2)$ for arbitrary sufficiently small $\epsilon$. Then the coefficients of the Taylor expansion
$$c^\alpha(x,\lambda)=c^\alpha_{-1}(x) + \frac{c^\alpha_0(x)}{\lambda} + \frac{c^\alpha_{1}(x)}{\lambda^2} + \ldots$$
of the Casimirs $c^\alpha(x,\lambda)$, $\alpha=1,\ldots,k$ of the Poisson tensor $\Pi_\lambda=\Pi_2 - \lambda \Pi_1$ commute with respect to both Poisson brackets,
$$\{c^\alpha_i,c^\beta_j\}_{1,2}=0\ ,\ i,j=-1,0,1,\ldots.$$
Moreover $\{\, \cdot\, ,c^\alpha_{i+1}\}_1=\{\, \cdot\, ,c^\alpha_{i}\}_2$, $i=-1,0,1,\ldots$.\\
\end{theorem}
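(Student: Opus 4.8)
The plan is to work directly from the defining property of a Casimir of the pencil and to extract both the recursion and the commutativity by comparing powers of the spectral parameter. Write $\{\,\cdot\,,\,\cdot\,\}_\lambda=\{\,\cdot\,,\,\cdot\,\}_2-\lambda\{\,\cdot\,,\,\cdot\,\}_1$ for the bracket associated to $\Pi_\lambda=\Pi_2-\lambda\Pi_1$; the compatibility hypothesis $[\Pi_1,\Pi_2]=0$ guarantees that $\{\,\cdot\,,\,\cdot\,\}_\lambda$ is Poisson for every $\lambda$. A Casimir $c^\alpha(x,\lambda)$ is characterized by $\{f,c^\alpha(\cdot,\lambda)\}_\lambda=0$ for all functions $f$, equivalently $\Pi_\lambda\,dc^\alpha=0$.

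First I would settle the existence and regularity of the Casimirs, which is where the corank hypothesis enters. Since $\Pi_1+\epsilon\Pi_2=-\lambda^{-1}\Pi_\lambda$ with $\epsilon=-\lambda^{-1}$, and corank is unchanged under multiplication by a nonzero scalar, the assumption $\mathrm{corank}(\Pi_1+\epsilon\Pi_2)=k$ for all small $\epsilon$ says precisely that $\ker\Pi_\lambda$ has constant dimension $k$ for all large $|\lambda|$. This lets me choose, locally near $\lambda=\infty$, a basis of $k$ functionally independent Casimirs depending analytically on $\lambda^{-1}$, which is exactly what is needed to write the Taylor expansion $c^\alpha(x,\lambda)=c^\alpha_{-1}(x)+c^\alpha_0(x)/\lambda+\ldots$\,. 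I expect this step — promoting constant corank of the pencil to analytic dependence of the kernel on the spectral parameter — to be the main obstacle; the rest is algebra.

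For the recursion I would substitute the Taylor expansion into $\Pi_\lambda\,dc^\alpha=0$ and collect coefficients of $\lambda^{-(i+1)}$. Because $\Pi_\lambda=\Pi_2-\lambda\Pi_1$, the contribution $-\lambda\,\Pi_1\,dc^\alpha_{i+1}\,\lambda^{-(i+2)}$ combines with $\Pi_2\,dc^\alpha_i\,\lambda^{-(i+1)}$, so vanishing of each coefficient gives $\Pi_1\,dc^\alpha_{i+1}=\Pi_2\,dc^\alpha_i$, i.e. $\{\,\cdot\,,c^\alpha_{i+1}\}_1=\{\,\cdot\,,c^\alpha_i\}_2$ for $i=-1,0,1,\ldots$; the top coefficient (the power $\lambda^{+1}$) yields $\Pi_1\,dc^\alpha_{-1}=0$, so $c^\alpha_{-1}$ is a Casimir of $\Pi_1$ and the scheme is correctly initialized.

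Finally, for the commutativity I would use the Casimir property twice with two distinct parameters $\lambda\neq\mu$. Since $c^\alpha(\cdot,\lambda)$ is a Casimir of $\Pi_\lambda$ and $c^\beta(\cdot,\mu)$ of $\Pi_\mu$, evaluating $\{c^\beta(\mu),c^\alpha(\lambda)\}_\lambda=0$ and $\{c^\alpha(\lambda),c^\beta(\mu)\}_\mu=0$ and using antisymmetry gives $\{c^\alpha(\lambda),c^\beta(\mu)\}_2=\lambda\{c^\alpha(\lambda),c^\beta(\mu)\}_1$ from the first and $\{c^\alpha(\lambda),c^\beta(\mu)\}_2=\mu\{c^\alpha(\lambda),c^\beta(\mu)\}_1$ from the second. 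Subtracting, $(\lambda-\mu)\{c^\alpha(\lambda),c^\beta(\mu)\}_1=0$, so both $\{c^\alpha(\lambda),c^\beta(\mu)\}_1$ and $\{c^\alpha(\lambda),c^\beta(\mu)\}_2$ vanish for $\lambda\neq\mu$, hence identically as functions of $(\lambda,\mu)$. Expanding the resulting double series in $\lambda^{-1},\mu^{-1}$ and reading off coefficients produces $\{c^\alpha_i,c^\beta_j\}_{1,2}=0$ for all $i,j$, which together with the recursion completes the proof.
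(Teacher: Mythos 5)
Your proof is correct, but note that the paper itself offers no proof of this statement: it is quoted as background from Magri \cite{M1}, so the honest comparison is with the classical argument and with the analogous computation the paper carries out later in its own setting. Your two steps are both sound. Matching coefficients of powers of $\lambda$ in $\Pi_\lambda\,dc^\alpha=0$ correctly yields the initialization $\Pi_1\,dc^\alpha_{-1}=0$ (from the $\lambda^{+1}$ coefficient, so the scheme starts from a Casimir of $\Pi_1$, exactly as the paper emphasizes in its discussion of resonance) and the Lenard recursion $\Pi_1\,dc^\alpha_{i+1}=\Pi_2\,dc^\alpha_i$; and the two-parameter kernel trick giving $(\lambda-\mu)\{c^\alpha(\lambda),c^\beta(\mu)\}_1=0$ is a clean way to obtain commutativity in both brackets at once, including for $\alpha\neq\beta$ (the vanishing for $\lambda\neq\mu$ extends to all $(\lambda,\mu)$ since the series are analytic near $\lambda=\mu=\infty$, or because $\lambda-\mu$ is a nonzerodivisor on such formal series). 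It is worth pointing out that where the author needs the same conclusion in the SFT setting (end of Section 3, commutativity of the $\Ih_{\check{\alpha},n}$), he argues differently: purely from the recursion, by telescoping
\begin{equation*}
\{c^\alpha_i,c^\beta_j\}_1=\{c^\alpha_i,c^\beta_{j-1}\}_2=\{c^\alpha_{i+1},c^\beta_{j-1}\}_1=\ldots=\{c^\alpha_{i+j+1},c^\beta_{-1}\}_1=0,
\end{equation*}
using only antisymmetry and the fact that $c^\beta_{-1}$ is a Casimir of $\Pi_1$. That variant needs no second spectral parameter and transfers verbatim to the graded case with $\omega$ in place of $\Pi_2$, whereas your argument uses the Casimir property of the whole pencil as input and in exchange avoids any induction; both are legitimate, and yours is arguably the more structural. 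The only genuinely delicate point is the one you flag yourself: promoting the constant-corank hypothesis to a $k$-tuple of Casimirs analytic in $\lambda^{-1}$. Your reduction $\Pi_1+\epsilon\Pi_2=-\lambda^{-1}\Pi_\lambda$ with $\epsilon=-\lambda^{-1}$ is right, and constant corank does make $\ker\Pi_\lambda$ an analytic bundle near $\lambda=\infty$; passing from the kernel bundle to actual functions requires a parametric version of the local splitting theorem. Since the theorem as stated already posits Casimirs with the given Taylor expansion, your sketch carries no real burden here, but if you wanted a self-contained statement you would need to supply that parametric Darboux--Weinstein argument.
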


While in the literature this procedure is often called bihamiltonian recursion, we will use the term {\it bihamiltonian reconstruction}, to avoid confusion with the property of a sequence of symmetries $\{I_{\alpha,i}\}_{\alpha=1,\ldots,k;i=-1,0,1,\ldots}$ of being in {\it bihamiltonian recursion} if
$$\{\, \cdot\, ,I_{\alpha,i}\}_2=\sum_{\substack{\beta=1,\ldots,k \\j=0,\ldots,i+1}}R^{\beta,j}_{\alpha,i}\{\, \cdot\, ,I_{\beta,j}\}_1$$
for some constant coefficients $R^{\beta,j}_{\alpha,i}$, $\alpha,\beta=1,\ldots,k$, $i,j=-1,0,1,\ldots$.\\

In the context of Gromov-Witten theory and the associated Frobenius manifolds such reconstruction can be used to recover at least part of the symmetries for the Hamiltonian system. Actually, with great generality, given a homogeneous Frobenius manifold, finding a fundamental solution for the so called deformed flat connection (a special case of topological recursion relations for rational one-descendant GW invariants, see \cite{DZ}) is a more powerful way to reconstruct the algebra of symmetries and, moreover, with this method the solution is automatically normalized to match the generating series of rational one-descendant GW invariants (often called $J$-function in the Gromov-Witten literature). Besides, the symmetries found this way are in bihamiltonian recursion anyway.\\

In contrast, bihamiltonian reconstruction is not, in general, directly related with enumerative geometry, which results in discrepancies with the $J$-function. Worse, even when the hypothesis of the above theorem are satisfied, it might happen that the two Poisson tensors have some Casimir in common. This means that, when starting with such common Casimirs as $c^\alpha_{-1}(x)$, in the above Taylor expansion, all of the other coefficients trivially vanish. This is precisely what happens in the case of the Gromov-Witten theory of the projective line $\IP^1$, where bihamiltonian recursion is only capable of recovering the symmetries generated by descendants of the K\"ahler class, but not those of the unity class. This pathology of the Poisson pencil is called {\it resonance}.\\

This paper deals with symplectic field theory (SFT) and a recursion procedure for descendants that has much in common with bihamiltonian recursion in Gromov-Witten theory but is instead actually a generalization of the method of deformed flat coordinates. Introduced by H. Hofer, A. Givental and Y. Eliashberg in 2000 \cite{EGH}, SFT is a very large
project and can be viewed as a topological quantum field theory approach to Gromov-Witten theory. Beside providing a
unified view on established pseudoholomorphic curve theories like symplectic Floer homology, contact homology and
Gromov-Witten theory, it sheds considerable light on the appearence of infinite dimensional Hamiltonian systems in the theory of holomorphic curves (see \cite{R2} for a review on this topic which includes SFT).\\

Indeed, symplectic field theory leads to algebraic invariants with very rich algebraic structures and in particular, as it was pointed out by Eliashberg in his ICM 2006 plenary talk (\cite{E}), the integrable systems of rational Gromov-Witten theory very naturally appear in rational symplectic field theory by using the link between the rational symplectic field theory of prequantization spaces in the Morse-Bott version and the rational Gromov-Witten potential of the underlying symplectic manifold (see the recent papers \cite{R1}, \cite{R2}). After introducing gravitational descendants (see \cite{F2}) along the lines of Gromov-Witten theory, it is precisely the natural algebraic structure of SFT that provides a natural link between holomorphic curves and (quantum) integrable systems.\\

In this paper we explore the potentiality of an intrinsic difference between Gromov-Witten and symplectic field theory: the moduli spaces of holomorphic maps studied by the latter carry special evaluation maps controlling the relative gluing angle of different components of a multi-floor configuration (the SFT generalization of nodal curves, see \cite{EGH}). These can be used to define new correlators that were not present in the original theory, but give interesting recursive formulas for one-point descendants (and probably beyond), which are similar but not equivalent to bihamiltonian reconstruction, topological recursion and, ultimately, to the integrability properties of the SFT infinite dimensional hamiltonian system.\\

In many senses these extra correlators which control the gluing angles of different components of curves in the boundary strata are a natural generalization of the non-$S^1$-equivariant differential for the non-equivariant linearized contact homology of Bourgeois-Oancea, \cite{BO} (see also \cite{FR1}), to non-linearized contact homology and full rational SFT. With this in mind we conclude that the non-equivariant differential plays a role similar to the second Poisson structure of Gromov-Witten theory with respect to gravitational descendants. More precisely we show how our generalized non-equivariant differential (the potential encoding such new correlators), denoted by $N$, satisfies (in contact homology) the axioms of a Nijenhuis operator.\\

Recall from Magri and Fuchssteiner, \cite{M2}\cite{Fu}, that on a manifold $M$, a Nijenhuis (or hereditary) operator $N \in \mathcal{T}^{(1,1)}M$ (where $\mathcal{T}^{(k,l)}M$ denotes the space of $(k,l)$-tensor fields on $M$) is one whose Nijenhuis torsion $T(N)\in \mathcal{T}^{(2,1)}M$:
\begin{equation*}
\begin{split}
T(N)(X,Y):&=[NX,NY]-N([NX,Y]+[X,NY])+N^2[X,Y]=\\
&=(\mathcal{L}_{NX}N)Y-N(\mathcal{L}_X N) Y
\end{split}
\end{equation*}
vanishes (here $X$ and $Y$ are any two graded vector fields). In components the Nijenhuis torsion reads:
\begin{equation}\label{torsion}
T(N)^a_{cd}=N^{a}_b\left(\frac{\del N^b_{c}}{\del x^{d}}-\frac{\del N^b_{d}}{\del x^c}\right)-\frac{\del N^{a}_{c}}{\del x^{b}} N^{b}_{d}+\frac{\del N^{a}_{d}}{\del x^{b}} N^{b}_{c}
\end{equation}
This condition ensures that, given a sequence of commuting vector fields
$$X_{\alpha,0}\in\mathcal{T}^{(1,0)}M,\qquad [X_{\alpha,0},X_{\beta,0}]=0,\qquad \alpha,\beta=1,\ldots,n$$ that are also symmetries of the operator $N$, i.e. $\mathcal{L}_{X_{\alpha,0}}N=0$, we can enlarge the commuting system by recursively applying the operator $N$:
$$X_{\alpha,k}:=N^k(X_{\alpha,0}),\qquad [X_{\alpha,k},X_{\beta,j}]=0,\qquad k,j\in\mathbb{N}.$$
Given a Poisson tensor $\Pi\in \mathcal{T}^{(2,0)}M$ on $M$, in \cite{MM} Magri and Morosi further studied the compatibility conditions of a Nijenhuis operator $N$ with the Poisson structure. $N$ is said to be compatible with $\Pi$ if the following two conditions hold:
$$N\circ \Pi=\Pi \circ {}^t N$$
$$\Pi^{lj}\left(\frac{\del N^k_m}{\del x^l}-\frac{\del N^k_l}{\del x^m}\right)-\Pi^{kl}\frac{\del N^j_m}{\del x^l} - N^l_m \frac{\del \Pi^{kj}}{\del x^l} + N^j_l \frac{\del \Pi^{kl}}{\del x^m} = 0$$
When these equations are satisfied, the pair $(\Pi,N)$ is called a Poisson-Nijenhuis structure on $M$. The main property, then, is that one can define a sequence of $(2,0)$-tensors $\Pi_k=N^k \circ \Pi$, $k=0,1,2,\ldots$ which are Poisson and are pairwise compatible in the sense that they pairwise form Poisson pencils. In particular we get the bihamiltonian structure $(\Pi_0=\Pi, \Pi_1=N \circ \Pi)$.\\

The theory of Poisson-Nijenhuis structures is very well developed and its relation with integrability is deep. In particular the study of the spectrum of $N$ plays a fundamental role, in that the eigenvalues of $N$ form a system of commuting symmetries in bihamiltonian recursion (see for instance \cite{MM},\cite{KSM},\cite{DLF}).\\

The first part of the paper deals with contact homology of contact manifolds. In this case, thanks to the total absence of non-constant nodal configurations (due to the maximum principle for holomorphic curves in symplectizations), we prove that the knowledge of $N$ and of the primary theory (contact homology differential $X$, with no descendants) is sufficient to completely reconstruct the descendant vector fields $X_{\alpha,n}$ as differential operators in the variables associated to Reeb orbits on contact homology.\\

In the case of full rational SFT our bihamiltonian recursion is slightly less effective because of the presence of non-constant nodal curves. Formally the result is similar, i.e. the descendant Hamiltonians $\Ih_{\alpha,n}$  satisfy recursion relations which are completely analogous to bihamiltonian recursion for a pair of bivectors $\Pi,\omega$, where $\Pi$ is the natural Poisson structure on the SFT homology algebra and $\omega$ is a \emph{graded symmetric} even bivector which is the SFT-generalization of $N$. \\

In any case this looks like a fundamental step in understanding the relation between completeness of the contact homology vector field system, or the SFT Hamiltonian system, and the underlying symplectic topology of the target manifold. Indeed the full information is contained in the non-equivariant correlators forming $N$ and $\omega$, and we plan to study the consequences in a subsequent publication.\\

Finally, this paper aims more to convey the basic ideas of the constructions and proofs involved in our results than to give a fully rigorous exposition. A brief discussion on the level of rigour at which our arguments are presented can be found in Remark \ref{rigour}.\\

\noindent{\bf Acknowledgements.}\\
Part of the work was conducted while I was a postdoc of Fondation de Sciences Mathematiques de Paris at the Institut de Mathematiques de Jussieu, Paris VI and part during my postdoc at the Institute for Mathematics of the University of Zurich (partially supported by SNF Grant No. PDAMP2\_137151/1). I would like to thank Y. Eliashberg, O. Fabert and D. Zvonkine for useful discussions. The final part of the work was actually completed during my visit at Stanford University at the beginning of 2012. I am grateful to Yakov Eliashberg for the invitation. Finally I would like to thank the American Institute of Mathematics for allowing me to organize the workshop on \emph{Integrable systems in Gromov-Witten and symplectic field theory}, in January 2012, during which I could speak to Boris Dubrovin, who then noticed how my equations for $N$ were equivalent to vanishing of Nijenhuis torsion and who pointed me to the correct reference (Magri, Morosi and Fuchssteiner) to see the big picture.\\

\vspace{0.5cm}

\section{Notions from Symplectic Field Theory}
Symplectic field theory (SFT), introduced by Y. Eliashberg,
A. Givental and H. Hofer in \cite{EGH}, consists in a unified and comprehensive approach to
the theory of holomorphic curves in symplectic and contact topology. In the spirit of a topological field theory, it assigns algebraic invariants
to closed manifolds with a stable Hamiltonian structure. We recall here the main ideas from \cite{EGH},\cite{FR},\cite{FR1}.\\

\subsection{Stable Hamiltonian structures and contact structures}
A Hamiltonian structure (see \cite{BEHWZ}) on a closed $(2n-1)$-dimensional manifold $V$ is a closed two-form $\Omega$
on $V$ of maximal rank $2n-2$. This means that $\ker\Omega=\{v\in TV:\Omega(v,\cdot)=0\}$
is a one-dimensional distribution. A Hamiltonian structure is called stable if there exists a
one-form $\lambda$ and a vector field $R$ (called Reeb vector field) on $V$ such that $R$ generates $\ker\Omega$, $\lambda(R)=1$ and $\iota_R d\lambda=0$. Notice that, this way, $\xi=\ker\lambda$ is a symplectic hyperplane distribution. Also, notice that if $R$ exists, it is completely determined by $\Omega$ and $\lambda$.\\

\begin{example}\label{contact}
Any contact form $\lambda$ on $V$ provides a stable Hamiltonian structure $(\Omega:=d\lambda,\lambda)$ on $V$ where the symplectic hyperplane distribution coincides with the contact structure.\\
\end{example}
\begin{example}\label{S1bundle}
Given a principal circle bundle $\pi: V\to M$ over a closed symplectic manifold $(M,\Omega_M)$ and any connection $1$-form $\lambda$, $(\Omega=\pi^*\Omega_M,\lambda)$ is a stable Hamiltonian structure on $V$.
\end{example}
\begin{example}
Given a closed symplectic manifold $(M,\Omega_M)$ and a symplectomorphism $\phi\in\Symp(M,\Omega_M)$, consider the symplectic mapping torus $V=\Mph=\IR\times M/\{(t,p)\sim(t+1,\phi(p))\}$. The natural splitting $TV=TS^1\oplus TM$ allows to define the lift $\Omega$ of $\Omega_M$ to $V$. Then $(\Omega,\lambda=dt)$, where $t$ is the natural $S^1$-coordinate, is a stable Hamiltonian structure (with integrable symplectic distribution $\ker\lambda$).
\end{example}

Given a stable Hamiltonian structure $(V,\Omega,\lambda)$, consider a complex structure $\Ju_\xi$ on the hyperplane distribution $\xi=\ker\lambda$ which is $\Omega|_{\xi}$-compatible (i.e. $\Omega|_{\xi}(\cdot,\Ju_{\xi}\cdot)$ is a metric on $\xi$). Such complex structures form a non-empty, contractible set. We extend $\Ju_\xi$ uniquely from $\xi$ to an almost complex structure $\Ju$ on the cylinder $\IR\times V$ by requiring that $\Ju$ is $\IR$-invariant and $\Ju\del_s=R$, $\del_s$ being the $\IR$-direction.\\

\subsection{Symplectic field theory}
Symplectic field theory assigns algebraic invariants to closed manifolds $V$ with a stable Hamiltonian structure. In this exposition we will however restrict to the special cases of contact manifolds (in the rest of the paper we will sometimes consider some particularly well behaved stable Hamiltonian structures too, see the rest of the section and, in particular, remark \ref{noncontact}, for more details). The invariants are defined by counting $\Ju$-holomorphic curves in $\IR\times V$ with finite energy.\\

Let us recall the definition of moduli spaces of holomorphic curves studied in rational SFT of contact manifolds. Let $\Gamma^+,\Gamma^-$ be two ordered sets of closed orbits $\gamma$ of the Reeb vector field
$R$ on $V$, i.e., $\gamma: \IR\to V$, $\gamma(t+T)=\gamma(t)$, $\dot{\gamma}=R$, where $T>0$ denotes the period of $\gamma$. Here we assume that the contact structure is nondegenerate, i.e. all closed orbits of the
Reeb vector field are nondegenerate in the sense of \cite{BEHWZ}; in particular, the set
of closed Reeb orbits is discrete. Given a closed Reeb orbit $\gamma$ of any multiplicity, we will denote by $\bar\gamma$ its underlying simple Reeb orbit. At each simple Reeb orbit we will fix a closed form $d\phi_{\bar\gamma}$ generating $H^1(S^1)$.\\

Then the (parametrized) moduli space $\IM^0_{r,A}(\Gamma^+,\Gamma^-)$ consists of tuples $(u,(z_k^{\pm}),(z_i))$, where $(z^{\pm}_k),(z_i)$ are three disjoint ordered sets of points on $\IP^1$, which are called positive and negative punctures, and additional marked points, respectively. We will also fix an asymptotic marker at each puncture, i.e. a ray originating at the puncture. The map $u: \Si \to \IR \times V$  starting from the punctured Riemann surface $\Si = \IP^1 \setminus  \{(z_k^{\pm})\}$ is required to satisfy the Cauchy-Riemann equation
\begin{equation*}
 \CR_{\Ju} u = du + \Ju(u) \cdot du \cdot i = 0
\end{equation*}
with respect to the complex structure $i$ on $\IP^1$ and an $\IR$-invariant almost complex structure $J$ on $V\times \IR$ compatible with the contact structure. Assuming we have chosen cylindrical coordinates $\psi^{\pm}_k: \IR^{\pm}
\times S^1 \to \Si$ around each puncture $z^{\pm}_k$ in the sense that $\psi_k^{\pm}(\pm\infty,t)=z_k^{\pm}$,
the map $u$ is additionally required to show for all $k=1,...,n^{\pm}$ the
asymptotic behaviour
\begin{equation*}
 \lim_{s\to\pm\infty} (u \circ \psi^{\pm}_k) (s,t+t_0) =
 (\pm \infty,\gamma^{\pm}_k(T^{\pm}_kt))
\end{equation*}
with $t_0\in S^1$  and with orbits $\gamma^{\pm}_k\in \Gamma^{\pm}$, where $T^{\pm}_k>0$ denotes period of $\gamma^{\pm}_k$. In order to assign an absolute homology class $A$ to a holomorphic curve $u:\Si\to\IR\times V$, let us assume for simplicity that $H_1(V)$ has no torsion so that we can employ spanning surfaces $u_{\gamma}$ connecting a given closed Reeb orbit $\gamma$ in $V$ to a linear combination of
circles $c_s$ representing a basis of $H_1(V)$,
\[\del u_{\gamma} = \gamma - \sum_s n_s\cdot c_s \]
in order to define
\[ A = [u_{\Gamma^+}] + [u(\Si)] - [u_{\Gamma^-}], \]
where $[u_{\Gamma^{\pm}}] = \sum_{n=1}^{s^{\pm}} [u_{\gamma^{\pm}_n}]$ viewed as singular chains. \\

Observe that the group $\Aut(\IP^1)$ of M\"obius transformations acts on elements in $\IM^0=\IM^0_{r,A}(\Gamma^+,\Gamma^-)$ in an obvious way,
\begin{equation*} \varphi.(u,(z^{\pm}_k),(z_i)) = (u\circ\varphi^{-1},(\varphi(z^{\pm}_k)),(\varphi(z_i))),\;\;\;\varphi\in\Aut(\IP^1), \end{equation*}
and we obtain the moduli space $\IM=\IM_{r,A}(\Gamma^+,\Gamma^-)$ studied in symplectic field theory by dividing out this action and the natural $\IR$-action on the target manifold $(\IR\times V,\Ju)$. Furthermore it was shown in \cite{BEHWZ} that this moduli space can be compactified to a moduli space $\CM=\CM_{r,A}(\Gamma^+,\Gamma^-)$ by adding moduli spaces of multi-floor curves with nodes. In particular the appropriate transversality and gluing theorems are conjectured to give that the moduli space has a codimension-one boundary components $\del\CM$ which are $k_\gamma$ quotients of products $\CM_1\times\CM_2= \CM_{r_1,A_1}(\Gamma_1^+,\Gamma_1^-\cup\{\gamma\})\times\CM_{r_2,A_2}(\{\gamma\}\cup\Gamma_2^+,\Gamma_2^-)$ of lower-dimensional moduli spaces by the $T^2$-action which rotates the asymptotic markers at the connecting orbit $\gamma$. Indeed every such quotient appears with a factor $\kappa_\gamma$ in the boundary: here the idea is that a $1$-floor rational curve can degenerate to a $2$-floor rational curve where the datum of asymptotic markers at the connecting node/Reeb orbit $\gamma$ is missing, hence the $T^2$-quotient, but there are $\kappa_\gamma$ ways to smooth out the node of a curve which is asymptotic to a Reeb orbit of multiplicity $\kappa_\gamma$. Another way to see the $T^2$ quotients with factor $\kappa_\gamma$ above is the result of first considering two-floor curves with matching asymptotic markers at the connecting Reeb orbit, whose moduli space is described as a fiber product over evaluation maps at the markers from the moduli space to the simple Reeb orbit $\bar\gamma$ underlying the connecting orbit $\gamma$ (preimage of the diagonal in $\bar\gamma\times\bar\gamma$), and then quotienting out the remaining $S^1$-action of (simultaneously) rotating the markers (this way the $\kappa_\gamma$ factor is seen as coming from the degree of the evaluation map to the underlying simple Reeb orbit, see below).\\

Let us now briefly introduce the algebraic formalism of rational SFT as described in \cite{EGH}: \\

Let us fix a trivialization of the symplectic bundle $(\xi, \Omega|_\xi)$ over each curve $c_i$. This induces a homotopically unique trivialization of the same bundle over each periodic Reeb orbit $\gamma$ via the spanning surface $u_\gamma$. Let us use this trivialization to define the Conley-Zehnder index of the Reeb orbit (the Maslov index of the path in $Sp(2m-2,\IR)$ given by the linearized Reeb flow along $\gamma$). Recall that a multiply-covered Reeb orbit $\gamma=\bar\gamma^k$ is called bad if
$\CZ(\gamma)\neq\CZ(\bar\gamma)\mod 2$, where $\CZ(\gamma)$ denotes the
Conley-Zehnder index of $\gamma$. Calling a Reeb orbit $\gamma$ {\it good} if it is not bad, denote by $\mathcal{P}$ the space of good Reeb orbits. We assign to every
good Reeb orbit $\gamma$ two formal graded variables $p_{\gamma},q_{\gamma}$ with grading
\begin{equation*}
|p_{\gamma}|=m-3-\CZ(\gamma),|q_{\gamma}|=m-3+\CZ(\gamma)
\end{equation*}
when $\dim V = 2m-1$.\\

Assuming we have chosen a basis $A_0,\ldots,A_M$ of $H_2(V)$, we assign to every $A_i$ a formal
variable $z_i$ with grading $|z_i|=- 2 c_1(A_i)$. In order to include higher-dimensional moduli spaces we further assume that a string
of closed (homogeneous) differential forms $\Theta=(\theta_1,\ldots,\theta_N)$ on $V$ is chosen and assign to
every $\theta_{\alpha}\in\Omega^*(V)$ a formal variable $t^{\alpha}$
with grading
\begin{equation*} |t^{\alpha}|=2 -\deg\theta_{\alpha}. \end{equation*}
With this let $\PP$ be the Poisson algebra of formal power series in the variables $p_{\gamma}$ and $t^\alpha$ with
coefficients which are polynomials in the variables $q_{\gamma}$ and Laurent series in the Novikov ring variables $z_i$, with Poisson bracket given by
\begin{equation*}
 \{f,g\} = \sum_{\gamma}\kappa_{\gamma}\Bigl(\frac{\del f}{\del p_{\gamma}}\frac{\del g}{\del q_{\gamma}} -
                          (-1)^{|f||g|}\frac{\del g}{\del p_{\gamma}}\frac{\del f}{\del q_{\gamma}}\Bigr).
\end{equation*}
where $\kappa_\gamma$ is the multiplicity of the orbit $\gamma$.\\

Consider the union $\CM_{r,n^+,n^-,A}$ of all moduli spaces $\CM_{r,A}(\Gamma^+,\Gamma^-)$ for $|\Gamma^+|=n^+$ and $|\Gamma^-|=n^-$. As in Gromov-Witten theory we want to organize all moduli spaces $\CM_{r,n^+,n^-,A}$
into a generating function $\Ih\in\PP$, called {\it Hamiltonian}. In order to include also higher-dimensional
moduli spaces, in \cite{EGH} the authors follow the approach in Gromov-Witten theory to integrate the chosen differential forms
$\theta_{\alpha}$ and $[\gamma]$ (the canonical basis of $H^*(\mathcal{P})$) over the moduli spaces after pulling them back under the evaluation maps $\ev_i$, $i=1,\ldots,r$, $ev_{\pm,j}$, $j=1,\ldots,n^\pm$ at the marked points and punctures to the target manifold $V$ and the space of (positive or negative) good Reeb orbits $\mathcal{P}$, respectively. Consider furthermore evaluation maps $\ev_{\pm\infty,j}:\CM\to \bigcup_{\gamma\in\mathcal{P}}\bar\gamma$, $j=1,\ldots,n^\pm$ defined by the asymptotic markers at each puncture. Let
$$t=\sum_{\alpha=1}^N t^\alpha \theta_\alpha$$
$$p=\sum_{\gamma\in\mathcal{P}} \frac{1}{\kappa_\gamma}p_\gamma [\gamma]$$
$$q=\sum_{\gamma\in\mathcal{P}} \frac{1}{\kappa_\gamma}q_\gamma [\gamma]$$
The Hamiltonian $\Ih$ is then defined by
\begin{equation*}
\begin{split}
 \Ih = \sum_{\substack{r,A,\\n^+,n^-}} \frac{1}{r! n^+! n^-!} \int_{\CM_{r,n^+,n^-,A}/\IR}&
\ \bigwedge_{i=1}^r\ev_i^*t\ \bigwedge_{j=1}^{n^+}(\ev_{+,j}^* p \wedge \ev^*_{+\infty,j} d\phi_{\bar\gamma^+_j}) \wedge \\
& \bigwedge_{k=1}^{n^-}(\ev_{-,k}^*q\wedge \ev^*_{-\infty,k} d\phi_{\bar\gamma^-_k})\ z^A
\end{split}
\end{equation*}
with $z^A = z_0^{d_0} \cdot \ldots \cdot z_M^{d_M}$ for $A=d_0A_0+\ldots+d_M A_M$.
The index formula for the dimension of the moduli space $\CM_{r,n^+,n^-,A}$ \cite{EGH} implies that $ |\Ih|=2(m-3)-1$.
\vspace{0.5cm}

\subsection{Gravitational descendants}
We recall the definition of gravitational descendants in symplectic field theory (see \cite{F2}). In complete analogy to Gromov-Witten theory we can introduce $r$ tautological line bundles $\LL_1,\ldots,\LL_r$ over each moduli space $\CM_{r,A}(\Gamma^+,\Gamma^-)$, as the pull-back of the relative dualizing sheaf of $\pi_i: \CM_{r+1,A}(\Gamma^+,\Gamma^-)\to\CM_{r,A}(\Gamma^+,\Gamma^-)$ under the canonical section $\sigma_i: \CM_{r,A}(\Gamma^+,\Gamma^-)\to\CM_{r+1,A}(\Gamma^+,\Gamma^-)$ mapping to the $i$-th marked point in the fibre. \\

As in Gromov-Witten theory we would like to consider the integration of (powers of) the first Chern class
of the tautological line bundles over the moduli space, which by Poincar\'e duality corresponds to counting common zeroes of
sections of such bundles. However, in symplectic field theory the moduli spaces can have
codimension-one boundary, so we need to replace integration of the first Chern class of the tautological line bundle over a single moduli
space with a construction involving all moduli space at once, which preserves the algebraic structure of SFT. \\

Following the compactness statement in [BEHWZ] and assuming transversality and gluing, we have already said that the codimension-one boundary of a moduli space $\CM=\CM_{r,A}(\Gamma^+,\Gamma^-)$ of SFT holomorphic curves consists of curves with two levels. More precisely, each component of the boundary has the form of a fibred product $\CM_1\times\CM_2=\CM_{r_1,A_1}(\Gamma_1^+,\Gamma_1^-)$ $\phantom{.}_{\ev_{-,n_1^-}}\times_{\ev_{+,1}}\CM_{r_2,A_2}(\Gamma_2^+,\Gamma_2^-)$ of moduli spaces (of strictly lower dimension), quotiented by the $T^2$-action that rotates asymptotic markers at the connecting puncture, and with the marked points distributed on the two levels. Consider a boundary component where the $i$-th marked point sits, say, on the first level $\CM_1$: it directly follows from the definition of the tautological line bundle $\LL_i$ at the $i$-th marked point over $\CM$ that, over such boundary component,
\begin{equation*} \pi^*\LL_i|_{\CM_1\times\CM_2/T^2} = \pi_1^*\LL_{i,1}\end{equation*}
where $\LL_{i,1}$ denotes the tautological line bundle over the moduli space $\CM_1$, $\pi:\CM_1\times\CM_2\to\CM_1\times\CM_2/T^2$ is the canonical projection to the quotient and $\pi_1:\CM_1\times\CM_2\to\CM_1$ is the projection onto the first factor. With this we can now give the definition of coherent collections of sections in tautological line bundles from \cite{F2}.

\begin{definition} Assume that we have chosen sections $s_i$ in the tautological line bundles $\LL_i$ over all moduli spaces $\CM$ of $\Ju$-holomorphic curves of SFT. Then these collections of sections $(s_i)$ are called \emph{coherent} if for every section $s_i$ of $\LL_i$ over a moduli space $\CM$ the following holds: for each codimension-one boundary component $\CM_1\times\CM_2/T^2$ of $\CM$ we have $\pi^*(s_i|_{\CM_1\times\CM_2/T^2})=\pi_1^*s_{i,k}$ with $k=1,2$  for the corresponding section $s_{i,k}$ of the tautological line bundle $\LL_{i,k}$ over $\CM_k$, assuming that the $i$-th marked point sits on the $k$-th level.\end{definition}

Since in the end we will again be interested in the zero sets of these sections, we will assume that all occuring sections are sufficiently generic, in particular, transversal to the zero section. Furthermore, we want to assume that all the chosen sections are indeed invariant under the obvious symmetries like reordering of punctures and marked points. In order to meet both requirements, it follows that we actually need to employ multi-sections (in the sense of branched manifolds). On the other hand, it is clear that one can always find coherent collections of (transversal) sections by using induction on the dimension of the underlying moduli space: indeed, as was remarked already in \cite{F2}, existence of coherent collections of sections is guaranteed thanks to the fact that, in the smooth category, it is always possible to extend a section of a bundle from a lower dimensional submanifold of the base to the full space. The idea is then to start by choosing sections on the lowest dimensional moduli spaces and work our way up to the bigger dimensional ones by extending such sections from the boundaries to the interiors of $\CM$.\\

For every tuple $(j_1,\ldots,j_r)$ of natural numbers we choose $j_i$ coherent collections of sections $(s_{i,k})$ of $\LL_i$. Then we define for every moduli space $\CM=\CM_{r,A}(\Gamma^+,\Gamma^-)$,
\begin{equation*}
\CM^{(j_1,...,j_r)} = s_{1,1}^{-1}(0)\cap\ldots\cap s_{1,j_1}^{-1}(0)\cap\ldots\cap s_{r,1}^{-1}(0)\cap\ldots\cap s_{r,j_r}^{-1}(0)\subset \CM.
\end{equation*}
Note that by choosing all sections sufficiently generic, we can assume $\CM^{(j_1,...,j_r)}=\CM_{r,A}^{(j_1,...,j_r)}(\Gamma^+,\Gamma^-)$ is a branched-labelled submanifold of the moduli space $\CM_{r,A}(\Gamma^+,\Gamma^-)$. Note that by definition
\begin{equation*} \CM^{(j_1,\ldots,j_r)} = \CM^{(j_1,0,\ldots,0)}\cap \ldots \cap \CM^{(0,\ldots,0,j_r)}, \end{equation*}
and it follows from the coherency condition that the codimension-one boundary of $\CM^{(0,\ldots,0,j,0,\ldots,0)}$ is given by the products $\CM^{(0,\ldots,0,j,0,\ldots,0)}_1\times\CM_2/T^2$ or $\CM_1\times\CM^{(0,\ldots,0,j,0,\ldots,0)}_2/T^2$ (depending on whether the $i$-th marked point sits on the first or second level).\\

With this we can define the descendant Hamiltonian of SFT, which we will denote by $\tilde{\Ih}$, while the
Hamiltonian $\Ih=\tilde{\Ih}|_{t^{\alpha,j}=0,j>0}$ defined in \cite{EGH} will from now on be called {\it primary}. In order to keep track of the descendants we
will assign to every chosen differential form $\theta_\alpha$ now a sequence of formal variables $t^{\alpha,j}$ with grading
\begin{equation*} |t^{\alpha,j}|=2(1-j) -\deg\theta_\alpha \end{equation*} and form an extended Poisson algebra $\tilde{\PP}$ accordingly.
Then the descendant Hamiltonian $\tilde{\Ih}\in\tilde{\PP}$ of (rational) SFT is defined by
\begin{equation*}
\begin{split}
 \tilde{\Ih} = \sum_{\substack{r,A,I\\n^+,n^-}} \int_{\CM^{(j_1,\ldots,j_r)}_{r,n^+,n^-,A}/\IR} &
 \ev_1^*\theta_{\alpha_1}\wedge\ldots\wedge\ev_r^*\theta_{\alpha_r} \bigwedge_{j=1}^{n^+}(\ev_{+,j}^* p \wedge \ev^*_{+\infty,j} d\phi_{\bar\gamma^+_j}) \wedge \\
& \bigwedge_{k=1}^{n^-}(\ev_{-,k}^*q\wedge \ev^*_{-\infty,k} d\phi_{\bar\gamma^-_k})\; t^I z^A,
\end{split}
\end{equation*}
where $t^I=t^{\alpha_1,j_1} \ldots t^{\alpha_r,j_r}$ and $z^A = z_0^{d_0} \cdot \ldots \cdot z_M^{d_M}$ for $A=d_0A_0+\ldots+d_M A_M$.\\

\vspace{0.5cm}

\subsection{Hamiltonian systems with symmetries}
Symplectic field theory assigns to every contact manifold not only a Poisson algebra, rational SFT homology, but also, thanks to gravitational descendants, a Hamiltonian system in it with an infinite number of symmetries.

\begin{theorem} Differentiating the rational Hamiltonian $\tilde{\Ih}\in\tilde{\PP}$ with respect to the formal variables $t_{\alpha,p}$
defines a sequence of classical Hamiltonians
\begin{equation*} \tilde{\Ih}_{\alpha,p}=\frac{\del\tilde{\Ih}}{\del t^{\alpha,p}} \in H_*(\tilde{\PP},\{\tilde{\Ih},\cdot\}) \end{equation*}
{\it in the rational SFT homology algebra with differential $\tilde{d}=\{\tilde{\Ih},\cdot\}: \tilde{\PP}\to\tilde{\PP}$, which commute with respect to the
bracket on $H_*(\tilde{\PP},\{\tilde{\Ih},\cdot\})$,}
\begin{equation*} \{\tilde{\Ih}_{\alpha,p},\tilde{\Ih}_{\beta,q}\} = 0,\; (\alpha,p),(\beta,q)\in\{1,\ldots,N\}\times\IN. \end{equation*}
\end{theorem}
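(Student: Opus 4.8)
The plan is to derive the entire statement from the single geometric input of symplectic field theory---the \emph{master equation} $\{\tilde{\Ih},\tilde{\Ih}\}=0$---together with the elementary observation that differentiation $\del/\del t^{\alpha,p}$ in the descendant variables is a graded derivation commuting with the Poisson bracket (the latter pairs only the variables $p_\gamma,q_\gamma$ and acts trivially on the $t^{\alpha,p}$). Everything else is formal manipulation of this identity.

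First I would recall why the master equation holds for the descendant Hamiltonian $\tilde{\Ih}$. By the compactness theorem of \cite{BEHWZ} the codimension-one boundary of each $\CM_{r,A}(\Gamma^+,\Gamma^-)$ is a union of fibre products $\CM_1\times\CM_2$ of lower-dimensional moduli spaces, and the coherency condition introduced above ensures that the cut-down spaces $\CM^{(j_1,\ldots,j_r)}$ inherit exactly this product structure on their boundary (using $\LL_i|_{\CM_1\times\CM_2}=\pi_1^*\LL_{i,1}$), with the descendant markers distributing across the two levels precisely as the Poisson bracket prescribes. Since every form being integrated is closed, Stokes' theorem identifies the total boundary contribution with $\{\tilde{\Ih},\tilde{\Ih}\}$, which therefore vanishes; this is the content of \cite{EGH} and \cite{F2}. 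Under the standard SFT grading conventions, in which $\tilde{\Ih}$ is odd, the graded Jacobi identity then yields $\tilde{d}^2=\{\tilde{\Ih},\{\tilde{\Ih},\cdot\}\}=\tfrac12\{\{\tilde{\Ih},\tilde{\Ih}\},\cdot\}=0$, so that the homology $H_*(\tilde{\PP},\tilde{d})$ is well defined.

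The main algebraic step is then to differentiate the master equation in the descendant variables. Applying $\del/\del t^{\alpha,p}$ to $\{\tilde{\Ih},\tilde{\Ih}\}=0$ and using the graded Leibniz rule, the two resulting terms turn out to be equal (the antisymmetry of the bracket and the oddness of $\tilde{\Ih}$ conspire to make their signs agree), so their vanishing sum forces $\{\tilde{\Ih},\tilde{\Ih}_{\alpha,p}\}=\tilde{d}\,\tilde{\Ih}_{\alpha,p}=0$; hence each $\tilde{\Ih}_{\alpha,p}$ is $\tilde{d}$-closed and defines a class in $H_*(\tilde{\PP},\tilde{d})$, as required for the first assertion. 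Differentiating this identity once more, now with respect to $t^{\beta,q}$, and again invoking Leibniz, I would obtain
\[ \{\tilde{\Ih}_{\beta,q},\tilde{\Ih}_{\alpha,p}\}=\pm\,\tilde{d}\!\left(\frac{\del^2\tilde{\Ih}}{\del t^{\alpha,p}\,\del t^{\beta,q}}\right), \]
with a sign fixed by the gradings $|t^{\beta,q}|$ and $|\tilde{\Ih}|$. Thus the bracket of any two descendant Hamiltonians is $\tilde{d}$-exact, hence zero in $H_*(\tilde{\PP},\tilde{d})$, which is precisely the asserted commutativity $\{\tilde{\Ih}_{\alpha,p},\tilde{\Ih}_{\beta,q}\}=0$.

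The hard part is concentrated entirely in the first of these steps: establishing the master equation rigorously requires the full transversality and gluing package of SFT (branched multisections / virtual cycles) so that the boundary counts and Stokes' theorem are legitimate, and---specifically for the descendant theory---the coherency of the chosen sections $(s_{i,k})$, without which the restriction of the tautological bundles and the reassembly of the two-level contributions into a genuine Poisson bracket would fail. By contrast, once the master equation is granted the remainder is the purely formal double differentiation above, the only delicate point being the careful bookkeeping of the Koszul signs arising from the odd degree of $\tilde{\Ih}$ and from the gradings of the descendant variables $t^{\alpha,p}$.
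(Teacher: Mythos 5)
Your proposal is correct and follows essentially the same route as the paper: take the master equation $\{\tilde{\Ih},\tilde{\Ih}\}=0$ (established via the boundary/coherency analysis of \cite{BEHWZ}, \cite{EGH}, \cite{F2}) as the sole geometric input, differentiate once with the graded Leibniz rule to obtain $\{\tilde{\Ih},\tilde{\Ih}_{\alpha,p}\}=0$, and differentiate twice, using that all summands of $\tilde{\Ih}$ are odd, to exhibit $\{\tilde{\Ih}_{\alpha,p},\tilde{\Ih}_{\beta,q}\}$ as $\tilde{d}$-exact, hence zero on homology. This matches the paper's argument, including the identity $\{\tilde{\Ih}_{\alpha,p},\tilde{\Ih}_{\beta,q}\}+(-1)^{|t^{\alpha,p}|}\{\tilde{\Ih},\frac{\del^2\tilde{\Ih}}{\del t^{\alpha,p}\del t^{\beta,q}}\}=0$ and the localization of all analytic difficulty in the master equation.
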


Everything is an immediate consequence of the master equation $\{\tilde{\Ih},\tilde{\Ih}\}=0$, which can be proven in the same
way as in the case without descendants using the results in \cite{F2}. The idea is simply using Stokes formula on the boundary of each moduli space appearing in the definition of the Hamiltonian. Such integral over ($\kappa_\gamma$ times) a boundary components of the form $\CM_1\times\CM_2/T^2$ can be expressed as an integral over the product $\CM_1\times\CM_2$ by integrating an extra $2$-form which is the pull back of $\frac{1}{k_\gamma}(d\phi_{\bar\gamma^+}\wedge d\phi_{\bar\gamma^-})$ at the connecting Reeb orbit $\gamma$ (notice how the $\kappa_\gamma^2$, that is produced by integration of such differential form over the fibres of the $T^2$-quotient projection, compensates with the $1/\kappa_\gamma$ factor to give the correct contribution $\kappa_\gamma$). On the other hand, this integral must give zero by Stokes theorem, as we are integrating closed differential forms on boundary $\del \CM$. This results is expressed, using the above Poisson algebra formalism, as $\{\tilde{\Ih},\tilde{\Ih}\}=0$.\\

Now the boundary equation $\tilde{d}\circ \tilde{d}=0$, $\tilde{d}=\{\tilde{\Ih},\cdot\}$ follows directly from the identity $\{\tilde{\Ih},\tilde{\Ih}\}=0$, while the fact that every $\tilde{\Ih}_{\alpha,p}$, $(\alpha,p)\in
\{1,\ldots,N\}\times\IN$ defines an element in the homology $H_*(\tilde{\PP},\{\tilde{\Ih},\cdot\})$ follows from the identity
\begin{equation*} \{\tilde{\Ih},\tilde{\Ih}_{\alpha,p}\} = 0,\end{equation*}
which can be shown by differentiating the master equation with respect to the $t^{\alpha,p}$-variable and using the
graded Leibniz rule,
\[ \frac{\del}{\del t^{\alpha,p}} \{f,g\} =
   \{\frac{\del f}{\del t^{\alpha,p}},g\} + (-1)^{|t^{\alpha,p}||f|} \{f,\frac{\del g}{\del t^{\alpha,p}}\}. \]
On the other hand,the fact that any two $\tilde{\Ih}_{\alpha,p}$,
$\tilde{\Ih}_{\beta,q}$ commute {\it after passing to homology} follows from the identity
\begin{equation*}
 \{\tilde{\Ih}_{\alpha,p},\tilde{\Ih}_{\beta,q}\}+(-1)^{|t^{\alpha,p}|}\{\tilde{\Ih},\frac{\del^2\tilde{\Ih}}{\del t^{\alpha,p}\del t^{\beta,q}}\} = 0.
\end{equation*}
obtained by differentiating the master equation twice and by recalling that $\tilde{\Ih}$ is homogeneous of odd degree.\\

We now turn to the question of independence of these nice algebraic structures from the choices like contact form,
cylindrical almost complex structure, representatives for the classes $[\theta_\alpha]\in H^*(V)$ and $[d\phi_{\bar\gamma}]\in H^*(S^1)$, abstract polyfold perturbations and, of course, the choice of the coherent
collection of sections. This is the content of the following theorem proven in \cite{F2}.

\begin{theorem} For different choices of contact form $\lambda^{\pm}$, cylindrical almost complex structure
$\Ju^{\pm}$ , representatives for the classes $[\theta_\alpha]\in H^*(V)$ and $[d\phi_{\bar\gamma}]\in H^*(S^1)$, abstract polyfold perturbations and sequences of coherent collections of sections $(s^{\pm}_j)$ the
resulting systems of commuting functions $\tilde{\Ih}^-_{\alpha,p}$ on $H_*(\tilde{\PP}^-,d^-)$ and $\tilde{\Ih}^+_{\alpha,p}$ on
$H_*(\tilde{\PP}^+,\tilde{d}^+)$ are isomorphic, i.e. there exists an isomorphism of the Poisson algebras $H_*(\tilde{\PP}^-,\tilde{d}^-)$
and $H_*(\tilde{\PP}^+,\tilde{d}^+)$ which maps $\tilde{\Ih}^-_{\alpha,p}\in H_*(\tilde{\PP}^-,\tilde{d}^-)$ to
$\tilde{\Ih}^+_{\alpha,p}\in H_*(\tilde{\PP}^+,\tilde{d}^+)$.
\end{theorem}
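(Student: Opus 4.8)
The plan is to prove invariance by the standard symplectic-cobordism technique of SFT, realizing the change of auxiliary data as a (trivial) cobordism and extracting an isomorphism of Poisson homology algebras from a count of holomorphic curves in it. Since the manifold $V$ is fixed and only the contact form, the cylindrical almost complex structure, the chosen representatives of $[\theta_\alpha]$ and $[d\phi_{\bar\gamma}]$, the polyfold perturbations and the coherent section collections vary, I would first build on $\IR\times V$ a one-parameter interpolating structure: an almost complex structure $\Ju^W$ that agrees with $\Ju^+$ and the $+$-data on the positive end $\{s\gg 0\}$ and with $\Ju^-$ and the $-$-data on the negative end $\{s\ll 0\}$, together with a homotopy of closed-form representatives connecting the two. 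This makes $\IR\times V$ into a cobordism between the two choices of stable Hamiltonian structure, directed from the positive end $V^+$ to the negative end $V^-$.

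Next I would count $\Ju^W$-holomorphic curves in this cobordism to produce a cobordism potential $\mathfrak{F}$, a generating function built from the variables of the two ends and organized exactly as $\tilde{\Ih}$ but with the asymptotic punctures split between $V^+$ and $V^-$. The descendants force an extra step: I must choose coherent collections of sections of the tautological line bundles $\LL_i$ over the cobordism moduli spaces that restrict, at the positive (resp. negative) symplectization level of each codimension-one boundary stratum, to the chosen $+$-sections (resp. $-$-sections); such collections exist by the same induction-on-dimension argument used on the ends. Analyzing the codimension-one boundary of $\CM^{(j_1,\dots,j_r)}$ in the cobordism, which by the compactness theorem consists of a symplectization level splitting off at the top, a symplectization level splitting off at the bottom, or a nodal/multi-floor degeneration, and using the coherency of the descendant sections to see that the insertions $\ev_i^*\theta_{\alpha_i}$ and the $\LL_i$-constraints distribute correctly, yields a compatibility (master) equation relating $\mathfrak{F}$, $\tilde{\Ih}^+$ and $\tilde{\Ih}^-$. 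This equation is precisely the statement that the morphism $\Phi$ determined by the generating function $\mathfrak{F}$ is a chain map intertwining $\tilde{d}^+=\{\tilde{\Ih}^+,\cdot\}$ and $\tilde{d}^-=\{\tilde{\Ih}^-,\cdot\}$; since $\Phi$ is the canonical transformation associated to a generating function it is a Poisson morphism, so it descends to a morphism of the Poisson homology algebras.

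I would then check the two remaining claims. That $\Phi$ intertwines the descendant Hamiltonians, $\Phi(\tilde{\Ih}^+_{\alpha,p})=\tilde{\Ih}^-_{\alpha,p}$ on homology, follows by differentiating the cobordism master equation with respect to $t^{\alpha,p}$ and applying the graded Leibniz rule for the Poisson bracket, exactly as in the commutation Theorem above; the $t^{\alpha,p}$-derivative of $\mathfrak{F}$ carries the single $\psi$-class constraint at the $i$-th point through the cobordism, which is compatible with $\Phi$ by construction of the coherent sections. That $\Phi$ is an isomorphism follows from a gluing/deformation argument: composing the cobordism with its reverse gives a cobordism that is deformation-equivalent, through admissible $\Ju^W$, to the trivial cylinder $\IR\times V$, whose potential induces the identity up to chain homotopy; hence $\Phi$ and the reverse morphism $\Psi$ satisfy $\Psi\circ\Phi\simeq\id\simeq\Phi\circ\Psi$ on homology, so $\Phi$ is invertible.

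The main obstacle is the descendant bookkeeping in the cobordism, i.e. constructing coherent sections of the tautological bundles over the cobordism moduli spaces that are simultaneously compatible with the $+$- and $-$-collections at both types of symplectization boundary, and verifying that the $\psi$-class constraints split additively across every codimension-one stratum, including the genuinely nodal ones absent in contact homology. This is what makes the compatibility equation take the clean form needed to intertwine $\tilde{\Ih}^\pm_{\alpha,p}$; the underlying transversality and gluing, handled abstractly in the polyfold and branched-multisection framework of \cite{F2}, are serious but standard, whereas the delicate point specific to descendants is the coherent extension of sections and the resulting additivity of the $\LL_i$-weights.
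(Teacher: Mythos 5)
Your proposal takes essentially the same route as the paper, which does not spell out a proof but cites \cite{F2}, describing the theorem as an immediate extension of the invariance theorem of \cite{EGH} via the standard interpolating-cobordism curve count, with the single genuinely new ingredient being collections of sections of the tautological line bundles over the cobordism moduli spaces that \emph{coherently connect} the two coherent collections $(s^{\pm}_j)$ --- precisely the point you isolate as the main obstacle. The details you supply (the cobordism master equation for the potential $\mathfrak{F}$, differentiating it in $t^{\alpha,p}$ to intertwine the descendant Hamiltonians, and invertibility by composing with the reverse cobordism and deforming to the trivial cylinder) are the standard ingredients of that argument and are consistent with the paper's sketch.
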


This theorem is an immediate extension of the theorem in \cite{EGH} which states that for different choices of auxiliary data the Poisson algebras $H_*(\PP^-,d^-)$ and $H_*(\PP^+,d^+)$ with $d^{\pm}=\{\Ih^{\pm},\cdot\}$ are isomorphic. In particular the extension in \cite{F2} is about the coherent collection of sections $(s^{\pm}_j)$.  Here one needs a notion of a collection of sections $(s_j)$ in the tautological line bundles over all moduli spaces of holomorphic curves in the cylindrical cobordism interpolating between the auxiliary structures, which are {\it coherently connecting} the two coherent collections of sections $(s^{\pm}_j)$.\\

We want to point out the fact that the primary Poisson SFT homology algebra can be thought of as the space of functions on some abstract infinite-dimensional Poisson super-space. Indeed, consider the Poisson super-space $\mathbf{V}$ underlying the Poisson algebra $\PP$. Then the kernel $\ker(\{\Ih,\cdot\})$ can be seen as the algebra of functions on the space $\mathcal{O}$ of orbits in $\mathbf{V}$ of the Hamiltonian $\IR$-action given by $\Ih$, that is, the flow lines of the Hamiltonian vector field $X_{\Ih}$ associated to $\Ih$. Even in a finite dimensional setting the space $\mathcal{O}$ can be very wild. Anyhow the image $\text{im}(\{\Ih,\cdot\})$ is an ideal of such algebra and hence identifies a sub-space of $\mathcal{O}$ given by all of those orbits $o\in \mathcal{O}$ at which, for any $f\in\PP$, $\{\Ih,f\}|_{o}=0$. But such orbits are simply the constant ones, where $X_{\Ih}$ vanishes. Hence the Poisson SFT-homology algebra $H_*(\PP,\{\Ih,\cdot\})$ can be regarded as the algebra of functions on $X_{\Ih}^{-1}(0)$, seen as a subspace of the space $\mathcal{O}$ of orbits of $\Ih$, endowed with a Poisson structure by singular, stationary reduction. In particular the descendant Hamiltonians $\Ih_{\alpha,j}:=\left.\frac{\del \tilde{\Ih}}{\del t^{\alpha,j}}\right|_{t^{\alpha,j}=0,j>0} \in H_*(\PP,\{\Ih,\cdot\})$ are examples of functions on such space.\\

Finally we recall a result from \cite{FR} that states that, besides commutativity, the SFT Hamiltonians satisfy analogues of the well-known string, dilaton and divisor equations of Gromov-Witten theory. Such equations hold, after passing to SFT homology, for any auxiliary choice used to define the Hamiltonians.

\begin{theorem}\label{SDD}
 For any choice of differential forms and coherent sections the following \emph{string, dilaton and divisor equations} hold \emph{after} passing to SFT-homology
\begin{eqnarray*}
\frac{\del}{\del t^{1,0}}\tilde{\Ih} &=& \int_V t\wedge t  + \sum_{k}t^{\alpha,k+1}\frac{\del}{\del t^{\alpha,k}}\tilde{\Ih}
 \;\in\; H_*(\tilde{\PP},\{\tilde{\Ih},\cdot\}), \\
\frac{\del}{\del t^{1,1}}\tilde{\Ih} &=& \ID_{\mathrm{Euler}}\tilde{\Ih}  \;\in\, H_*(\tilde{\PP},\{\tilde{\Ih},\cdot\}), \\
\left(\frac{\del}{\del t^{2,0}} -z\frac{\del}{\del z}\right)\tilde{\Ih} &=& \int_V t\wedge t\wedge \theta_2 + \sum_{k} t^{\alpha,k+1} c_{2\alpha}^\beta\frac{\del\tilde{\Ih}}{\del t^{\beta, k}} \;\in\; H_*(\tilde{\PP},\{\tilde{\Ih},\cdot\}),
\end{eqnarray*}
where $t^{1,k}$ is the $t$-variable associated to the $k$-th descendant of the unity class $1\in H^*(V)$, $t^{2,k}$ is the one associated with $\theta_2 \in H^2(V)$ and $z$ the corresponding Novikov ring variable, and $\ID_{\mathrm{Euler}}$ is the linear differential operator
$$\ID_{\mathrm{Euler}} := 2 -\sum_\gamma p_\gamma\frac{\del}{\del p_\gamma}
-\sum_\gamma q_\gamma\frac{\del}{\del q_\gamma}-\sum_{\alpha,p}t^{\alpha,p}\frac{\del}{\del t^{\alpha,p}}.$$\\
\end{theorem}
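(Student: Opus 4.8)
\emph{Proof proposal.} The plan is to derive all three identities as the symplectic field theory incarnations of the classical string, dilaton and divisor equations of Gromov--Witten theory, each governed by a single geometric device: the forgetful map
$$\pi:\CM_{r+1,A}(\Gamma^+,\Gamma^-)\to\CM_{r,A}(\Gamma^+,\Gamma^-)$$
that drops one marked point carrying a distinguished insertion. Rather than integrating powers of $c_1(\LL_i)$ directly --- which is illegitimate here because the moduli spaces carry codimension-one boundary --- I would work throughout with the coherent (multi-)sections of the tautological bundles, using their defining restriction property over the boundary strata to keep the constructions compatible with $\pi$. By construction the derivative $\del/\del t^{\alpha,j}$ applied to $\tilde{\Ih}$ is the insertion of a marked point decorated with $\theta_\alpha$ together with $j$ coherent sections of its tautological line bundle, so the content of each equation is a comparison between such insertions before and after forgetting a point.

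For the string equation I would take the forgotten point to carry the unity class $\theta_1=1$ with no descendant. The essential input is the comparison of tautological line bundles under $\pi$: over the locus where the forgotten point collides with the $i$-th remaining point, the pullback $\pi^*\LL_i$ differs from $\LL_i$ by the corresponding collision divisor, so that pulling the descendant sections back through $\pi$ and pushing forward produces exactly a shift lowering each descendant index by one. Integrating the unity over the generic fibre of $\pi$ contributes nothing by dimension, so only these collision corrections survive, yielding the recursion $\sum_k t^{\alpha,k+1}\,\del\tilde{\Ih}/\del t^{\alpha,k}$. The inhomogeneous term $\int_V t\wedge t$ is the contribution of the exceptional low-point configurations on which $\pi$ is not defined as a genuine forgetful map, i.e.\ the SFT analogue of the unstable range in Gromov--Witten theory.

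The dilaton and divisor equations follow the same template with different insertions. For the dilaton equation I would insert the first descendant of the unity: fibre integration of the psi-class along $\pi$ returns the Euler characteristic of the fibre, and translating this dimension/grading count into the bookkeeping of the $p_\gamma$, $q_\gamma$ and $t^{\alpha,p}$ variables reproduces precisely the operator $\ID_{\mathrm{Euler}}$. For the divisor equation the forgotten point carries a class $\theta_2\in H^2(V)$; the divisor axiom --- pairing the two-form against the curve's homology class $A$ --- is encoded by the Novikov derivative $-z\,\del/\del z$, while the remaining contribution comes from combining the same psi-class comparison as above with the cup product by $\theta_2$ on $H^*(V)$, whose structure constants are the $c_{2\alpha}^\beta$ appearing in the stated recursion.

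The main obstacle, and the reason all three equalities are asserted only \emph{after} passing to SFT homology, is the codimension-one boundary of the moduli spaces, which has no counterpart in Gromov--Witten theory. The forgetful map and the fibre-integration identities above are valid in the interior, but each boundary stratum --- a fibre product of lower-dimensional moduli spaces --- contributes extra terms. The heart of the argument is to show, using the coherency of the chosen sections together with the compatibility of the asymptotic-marker factors $d\phi_{\bar\gamma}$ and the residual $S^1$-data with $\pi$, that all of these boundary contributions assemble into terms that are exact for the differential $\tilde{d}=\{\tilde{\Ih},\cdot\}$. Once that is established the three identities hold identically in $H_*(\tilde{\PP},\{\tilde{\Ih},\cdot\})$, exactly as claimed.
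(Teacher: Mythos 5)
The paper itself contains no proof of this theorem: it is explicitly \emph{recalled} from \cite{FR} (Fabert--Rossi, \emph{String, dilaton and divisor equation in Symplectic Field Theory}), and the argument given there is exactly the one you propose --- the forgetful map $\pi$, the comparison of tautological bundles over the collision divisors implemented through coherent sections chosen compatibly with $\pi$, the unstable configurations producing the inhomogeneous $\int_V t\wedge t$ (resp.\ $\int_V t\wedge t\wedge\theta_2$) terms, the fibre-integrated psi-class giving $\ID_{\mathrm{Euler}}$ and the Novikov derivative encoding the pairing of $\theta_2$ with $A$, and crucially the identification of all codimension-one boundary contributions as $\{\tilde{\Ih},\cdot\}$-exact, which is precisely why the identities hold only after passing to $H_*(\tilde{\PP},\{\tilde{\Ih},\cdot\})$. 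Your proposal is correct and follows essentially the same route as the cited proof, so there is nothing to flag beyond noting that, as with everything in this paper, it is rigorous only up to the transversality caveat stated in the Remark at the end of Section 1.
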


\begin{remark}\label{noncontact}
Most of the above results actually hold in the case of more general stable Hamiltonian structures, beyond the contact situation, but their precise statement and the involved construction require some variations. One of the main problems arises from the fact that, in the non-contact case, the coefficients of monomials in the variables $p_\gamma$ in the Hamiltonian $\Ih$ might not be polynomials in the $q_\gamma$ variables. The  Hamiltonian $\Ih$ would then fail to be an element of the Poisson algebra $\PP$ as we have defined it and the construction of invariants would be more subtle. In the following we will mostly stick to the contact case, but whenever we consider target manifolds that have a non-contact stable Hamiltonian structure, we will assume that these problems do not arise and the above contructions remain valid.
\end{remark}

\begin{remark}\label{rigour}
We end this introductory section with a short discussion about the level of rigour of our results. All the algebraic results we present for holomorphic curves rely first of all on the fact that all appearing moduli spaces are (weighted branched) manifolds (or orbifolds) with corners, of dimension equal to the Fredholm index of the Cauchy-Riemann operator. In order to equip the zero set of the Cauchy-Riemann operator with such structure, one would like to apply an infinite-dimensional version of the classical implicit function theorem, and the crucial step here is to prove a transversality result for the Cauchy-Riemann operator. While it is well-known that transversality holds for a generic choice of almost complex structure as long as all holomorphic curves are simple, several problems appear when the curve is multiply-covered. This problem is already present in (symplectic) Gromov-Witten theory and Floer homology, and very involved tools like virtual moduli cycles, Kuranishi structures and polyfolds were developed to solve it. In particular the polyfold approach of Hofer, Wysocki and Zehnder, see \cite{HWZ} is supposed to solve all the challenges in the most satisfactory way (see also the survey \cite{F3}), but it is not yet fully completed. Since they promise to prove transversality for symplectic field theory and Gromov-Witten theory in one of their upcoming papers, we follow other papers in the field in considering everything up to transversality and state it nevertheless as a theorem. However, beside not dealing with these foundational problems, the proofs we give in this paper are admittedly somewhat sketchy. However we claim that most missing technical details (apart transversality), with special emphasis on the non-$S^1$-equivariant moduli spaces we consider throughout the paper, can be recovered in the literature. In particular this paper uses the description of  moduli spaces of $S^1$-parametrized cylinders with punctures which appered in \cite{BO}, where a version of non-$S^1$-equivariant contact homology is presented, to shed some light on the elegant algebraic structure that can be deduced from it. Even when we introduce some other versions of moduli spaces of parametrized curves (in the proof of theorems \ref{masterCH} and \ref{omegarecursionCH}, for instance) and describe what form their compactification (and in particular codimension-$1$ boundary) should have, we use the same type of curve degeneration (the matching of the $S^1$-parametrization at the corresponding node/Reeb orbit). In the final part of the paper, when we move to the case of rational SFT, we use again the same ideas to sketch how some analogous definition of non-$S^1$-equivariant correlators produce an algebraic structure so natural that it integrates perfectly with the Hamiltonian systems formalism of the theory (even pointing to an answer to the integrability problem for such systems, for which for now no approach was found). We would like, in any case, to stress the ``research announcement'' character, especially for the second part of the paper, as we leave a more attentive analysis of the geometry of the moduli spaces (again, even apart from transversality) for a future work.
\end{remark}

\section{$N$-recursion for contact homology}

\subsection{Contact homology}
Contact homology is a reduced version of symplectic field theory that only uses curves with at most one positive puncture. In case the target manifold $V$ is contact, the maximum principle for holomorphic curves in symplectizations forbids that a SFT-holomorphic curve with $s^+$ positive punctures can degenerate to a multi-floor curve where any of the components has more than $s^+$ positive punctures. The absence of local maxima further implies that any curve with no positive punctures must be constant. In particular this means that we can study moduli spaces of SFT-curves with only one positive puncture and be safe that the boundary only involves moduli spaces of the same type. Besides, in such moduli spaces nodal degeneration only involve the appearance of constant bubbles (when different marked points come together).\\

The algebraic structure we obtain is the linear part in the $p$-variables of the one described for full SFT. In particular we get a complex formed by the graded commutative algebra $\mathcal{A}$ generated by the variables $q_\gamma$ over the power series in the variables $t^{\alpha}=t^{\alpha,0}$ with coefficients in the Novikov ring of variables $z_k$ (i.e. the evaluation at $p=0$ of the Poisson algebra $\PP$), and differential given by the (odd) vector field $$X=\sum_\gamma X^\gamma\frac{\del}{\del q_\gamma}=\sum_\gamma \kappa_\gamma \left.\frac{\del \Ih}{\del p_{\gamma}}\right|_{p=0} \frac{\del}{\del q_\gamma}.$$
The master equation $\{\Ih,\Ih\}=0$ reduces to $[X,X]=0$ (the square bracket here stands for graded Lie bracket on graded vector fields and, from the formula above and the fact that $|\Ih|=2(m-3)-1$ is odd, we obtain that $|X|=-1$, so $X$ is an odd vector field), the resulting homology will be denoted by $CH(\mathcal{A};X)$ (or simply $CH(V)$ when there is no danger of confusion) and the system of commuting (on SFT-homology) descendant Hamiltonians $\Ih_{\alpha,i}=\left.\frac{\del \tilde{\Ih}}{\del t^{\alpha,i}}\right|_{t^{\beta,j}=0,j>0}$, $(\alpha,i)\in\{1,\ldots,N\}\times\IN$ induce a system of Lie-commuting vector fields on contact homology $$X_{\alpha,i}=\sum_{\gamma}\kappa_\gamma\left.\frac{\del \Ih_{\alpha,i}}{\del p_\gamma}\right|_{p=0}\frac{\del}{\del q_\gamma} :CH(V)\to CH(V).$$\\

\subsection{The non-equivariant differential revisited}
Consider now a moduli spaces of punctured $S^1$-parametrized cylinders with marked points. We start with the fully parametrized space  $\IM^{S^1,0}_{r,A}(\gamma_0,\gamma_\infty,\Gamma^-)$ consisting of tuples $(u,(z_k^{-}),(z_i))$, where $(z^{-}_k),(z_i)$ are two disjoint ordered sets of points on $\IP^1 \setminus (\{0,\infty\})=S^1\times\IR$ (namely negative punctures with asymptotic markers, and $r$ additional marked points). The map $u: \Si \to \IR \times V$  from the punctured Riemann surface $\Si = \IP^1 \setminus (\{0,\infty\} \cup  \{(z_k^{-})\})$ is required to satisfy the Cauchy-Riemann equation
\begin{equation*}
 \CR_{\Ju} u = du + \Ju(u) \cdot du \cdot i = 0
\end{equation*}
with the complex structure $i$ on $\IP^1$. Assuming we have chosen cylindrical coordinates $\psi^{-}_k: \IR^{-}
\times S^1 \to \Si$ around each puncture $z^{-}_k$, in the sense that $\psi_k^{-}(-\infty,t)=z_k^{-}$,
the map $u$ is additionally required to show for all $k=1,...,n^{-}$ the
asymptotic behaviour
\begin{equation*}
 \lim_{s\to -\infty} (u \circ \psi^{-}_k) (s,t+t_0) =
 (- \infty,\gamma^{-}_k(T^{-}_kt))
\end{equation*}
with $t_0\in S^1$ and with orbits $\gamma^{-}_k\in \Gamma^{-}$, where $T^{-}_k>0$ denotes period of $\gamma^{-}_k$, and analogous asymptotic behaviour at $0$ and $\infty$ for $s\to +\infty$ and $s\to -\infty$ respectively, for orbits $\gamma_0$ and $\gamma_\infty$ and with respect to the natural coordinates on $S^1\times\IR$. We assign to each curve an absolute homology class $A$ employing as usual a choice of spanning surfaces.
In order to obtain the $S^1$-parametrized space $\IM^{S^1}_{r,A}(\gamma_0,\gamma_\infty,\Gamma^-)$ we only divide out the $\IR$-component of the $S^1\times \IR$ group of automorphisms of the cylinder $\IP^1 \setminus (\{0,\infty\})$ and, as usual, the $\IR$-action coming from the cylindrical target $V\times\IR$ as well.\\

This moduli space can be compactified to $\CM^{S^1}=\CM^{S^1}_{r,A}(\gamma_0,\gamma_\infty,\Gamma^-)$ by adding moduli spaces of $S^1$-parametrized multi-floor curves with ghost bubbles, where the puncture at $0$ is always the positive puncture of the top floor, the puncture at $\infty$ can be on any floor and the $S^1$-parametrization is remembered when going through connecting punctures as explained in \cite{BEHWZ} (compactification of the space of curves with {\it decorations}). More explicitly, in such compactification, a $n$-floor curve with the $\infty$-puncture on its $k$-th component from the top, has the upper $k$ components that are $S^1$-parametrized curves where, at each connecting puncture, the $S^1$-coordinates of different components match, while the lower $(n-k)$ are non-$S^1$-parametrized. Also, both type of components possibly have stable constant bubbles.\\

The space $\CM^{S^1}$ carries, besides the usual evaluation maps at marked points, orbits and asymptotic markers, also evaluation maps at the special punctures at $0$ and $\infty$ to the corresponding target simple Reeb orbits given by the special $S^1$-coordinate on the curve,
$$\ev_{+\infty,0}:\CM^{S^1}\to\bar\gamma_0 \simeq S^1$$
$$\ev_{-\infty,\infty}:\CM^{S^1}\to  \bar\gamma_\infty \simeq S^1.$$\\
These two evaluation maps are similar to the ones defined at any other puncture by asymptotic markers, but are now coupled thank to the fixed parametrization.\\

We form the space $\CM^{S^1}_{r,n^-,A}(\gamma_0,\gamma_\infty)$ by taking the union over $\Gamma^-$ of all the spaces $\CM^{S^1}_{r,A}(\gamma_0,\gamma_\infty,\Gamma^-) $ with $|\Gamma^-|=n^-$.\\

This moduli space was actually already introduced in \cite{BO} to define the non-$S^1$-equivariant linearized contact homology differential. We will proceed in a similar way defining a $(1,1)$-tensor  $N(t^\alpha)$ depending on parameters $t^1,\ldots,t^N$, on the super-space $\mathbf{Q}$ underlying the algebra $\mathcal{A}$. A point $q\in\mathbf{Q}$ is a cohomology class $q=\sum_{\gamma\in\mathcal{P}} \frac{1}{\kappa_\gamma}q_\gamma [\gamma]$ on $\mathcal{P}$, with the notations of section $1$. We will write $q^\gamma$ instead of $q_\gamma$ to be coherent with the notion that $q_\gamma$ is treated here as a coordinate for the space $\mathbf{Q}$, while we will treat the $t$-variables as parameters on which the functions on $\mathbf{Q}$ can depend. Using such coordinates we define
$$N=N_{\gamma_1}^{\gamma_2}(t,q)\  dq^{\gamma_1}\otimes \frac{\del}{\del q^{\gamma_2}}$$
where we sum over repeated indices, and
\begin{equation*}
\begin{split}
N_{\gamma_1}^{\gamma_2}(t,q) =\sum  \frac{1}{r! n^- ! \kappa_{\gamma_1}} \int_{\CM^{S^1}_{r,n^-,A}(\gamma_1,\gamma_2)} & \bigwedge_{i=1}^r \ev_i^* t\ \bigwedge_{j=1}^{n^-}(ev^*_{-,j}q \wedge \ev^*_{-\infty,j}d\phi_{\bar\gamma^-_j})\ \wedge \\
&\wedge \ev_{+\infty,0}^* d\phi_{\bar\gamma_1} \wedge \ev_{-\infty,\infty}^* d\phi_{\bar\gamma_2}
\end{split}
\end{equation*}
With the usual grading of the SFT variables, and assigning degree $0$ to the exterior differential $d$ on the superspace $\mathbf{V}$, from the index formula for the dimension of the moduli space of SFT-curves, we deduce that N has even degree:
$$|N|=-2$$
This comes from the fact our moduli space has dimension one less than the ordinary, $S^1$-equivariant, moduli space of curves involved in the definition of the contact homology vector field $X$, because of the extra constraint we are imposing when we want the asymptotic markers at $0$ and $\infty$ to be coupled (so the degree of $N$ is one less than the degree of $X$).\\

Let us now briefly describe the codimension one boundary of the moduli space $\CM^{S^1}$, assuming transversality and following \cite{BO}. Such boundary consists of $2$-floor curves of one of the following two types. Either the the connecting node/Reeb orbit $\gamma$ separates the $0$- and $\infty$-puncture and the global $S^1$-parametrization is remembered when going through such node (meaning that the parametrizations of the two floors match when meeting at $\gamma$) or the $0$- and $\infty$-puncture are both on the top floor, which is then again an $S^1$-parametrized cylinder (with punctures and marked points), while the bottom floor is an ordinary unparametrized curve of the type usually studied in ordinary, $S^1$-equivariant, contact homology. More precisely, denoting these two components of the boundary $\del'_{(r_1,A_1,\Gamma_1|\gamma|r_2,A_2,\Gamma_2)} \CM^{S^1}$ and $\del''_{(r_1,A_1,\Gamma_1|\gamma|r_2,A_2,\Gamma_2)} \CM^{S^1}$ respectively (the subscript indicates how marked points, homology class and negative punctures distribute among the two floors, $\Gamma^-=\Gamma^-_1\cup \Gamma^-_2$, $r=r_1+r_2$, $A=A_1+A_2$), we have the identifications:
$$\del' \CM^{S^1} \simeq \CM^{S^1}_{r_1,A_1}(\gamma_0,\gamma,\Gamma^-_1)\  \prescript{}{\ev_{-\infty,\infty}}\times_{\ev_{+\infty,0}} \ \CM^{S^1}_{r_2,A_2}(\gamma,\gamma_\infty,\Gamma^-_2)$$
$$\del'' \CM^{S^1}\simeq(\CM^{S^1}_{r_1,A_1}(\gamma_0,\gamma_\infty,\Gamma^-_1\cup\{\gamma\})\times \CM_{r_2,A_2}(\gamma,\Gamma^-_2)) /T^2$$

Notice that the no-descendant (or primary) contact homology differential $X$ (still parametrized by the primary variables $t^\alpha$) induces a differential $\mathcal{L}_{X}$ (Lie derivative along the vector field $X$) on the space of $(k,l)$-tensor fields $\mathcal{T}^{(k,l)}\mathbf{Q}$ (again with parameters $t^\alpha$) on the super-space $\mathbf{Q}$. The resulting homology, which we denote $CH(\mathcal{T}^{(k,l)}\mathbf{Q};\mathcal{L}_X)$, is a module over $CH(\mathcal{A};X)=CH(\mathcal{T}^{(0,0)}\mathbf{Q};\mathcal{L}_{X})$ and is an invariant of the contact structure on $V$, as it can easily be proved with the same procedure as for $CH(\mathcal{A};X)$. In particular, for two different choices of contact form $\lambda^{\pm}$, cylindrical almost complex structure
$\Ju^{\pm}$ , representatives for the classes $[\theta_\alpha]\in H^*(V)$ and $[d\phi_{\bar\gamma}]\in H^*(S^1)$, abstract polyfold perturbations and sequences of coherent collections of sections $(s^{\pm}_j)$, there exist an isomorphism
$$d\varphi^\pm: CH(\mathcal{T}^{(k,l)}\mathbf{Q^+},\mathcal{L}_{X^+}) \to CH(\mathcal{T}^{(k,l)}\mathbf{Q^-},\mathcal{L}_{X^-})$$
which is simply the differential of the isomorphism 
$$\varphi^\pm: CH(\mathcal{A^+};X^+) \to CH(\mathcal{A^-};X^-),$$
constructed in \cite{EGH} by studying curves in the cobordims $W=\overrightarrow{V^+V^-}$ interpolating between the two different choices. This differential $d\varphi^\pm$ is in the sense of differential geometric lift to the tensor algebra of a diffeomorphism of our base formal manifolds $\mathbf{Q}^+$ and $\mathbf{Q}^-$: since the diffeomorphism $\phi^\pm$ passes to homology with respect to a derivation $X$, its differential passes to homology with respect to the Lie derivative $\mathcal{L}_X$ (see also the discussion on invariance for satellites in SFT from \cite{EGH}, which is completely analogous).

\begin{theorem}
$$\mathcal{L}_X N = 0$$
and, denoting by $N^\pm$ the two $(1,1)$-tensors resulting from two different choices of contact form $\lambda^{\pm}$, cylindrical almost complex structure
$\Ju^{\pm}$ , representatives for the classes $[\theta_\alpha]\in H^*(V)$ and $[d\phi_{\bar\gamma}]\in H^*(S^1)$, abstract polyfold perturbations and sequences of coherent collections of sections $(s^{\pm}_j)$,
\begin{equation*}
\begin{split}
d\varphi^\pm:  \ &CH(\mathcal{T}^{(1,1)}\mathbf{Q^+},\mathcal{L}_{X^+})\ \to\ CH(\mathcal{T}^{(1,1)}\mathbf{Q^-},\mathcal{L}_{X^-})\\
&\hspace{1.5 cm}N^+\hspace{1.1cm} \mapsto \hspace{1.1cm}N^-
\end{split}
\end{equation*}
so that $N\in CH(\mathcal{T}^{(1,1)}\mathbf{Q},\mathcal{L}_{X})$ is an invariant of the contact structure on $V$.
\end{theorem}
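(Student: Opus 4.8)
The plan is to prove both statements by the standard SFT boundary (Stokes) argument applied to the compactified moduli spaces $\CM^{S^1}$ of $S^1$-parametrized cylinders, the only genuinely new ingredient being the bookkeeping of the angular forms $d\phi$ attached to the decorated punctures. First I would expand $\mathcal{L}_X N$ in the coordinates $q^\gamma$ on $\mathbf{Q}$. As a $(1,1)$-tensor it has, up to the signs dictated by the gradings, three components: the transport term $X^c\,\del_c N_{\gamma_1}^{\gamma_2}$, the contraction $(\del_{\gamma_1}X^c)\,N_c^{\gamma_2}$ coming from the covariant slot, and the contraction $-\,N_{\gamma_1}^c\,(\del_c X^{\gamma_2})$ coming from the contravariant slot. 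The claim is that each of these is realized as a codimension-one boundary contribution of the spaces $\CM^{S^1}_{r,n^-,A}(\gamma_1,\gamma_2)$ taken one dimension larger than those defining $N$. Since the integrand defining $N$ is a wedge of pull-backs of closed forms (the $\ev_i^*t$, the $\ev^*_{-,j}q$, and the angular forms $\ev^*d\phi$), it is closed, so by Stokes its integral over $\del\,\CM^{S^1}$ vanishes; that is, the algebraic sum of all boundary contributions is zero.

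Next I would invoke the BEHWZ description of $\del\,\CM^{S^1}$ as a union of two-floor buildings (constant nodal bubbles contributing nothing) and sort these according to the floor carrying the $\infty$-puncture. Buildings made of one ordinary contact-homology floor (an $X$-curve) and one $S^1$-parametrized floor (an $N$-curve) reproduce exactly $\mathcal{L}_X N$: gluing the $X$-floor at an ordinary negative puncture of the $N$-floor gives $X^c\,\del_c N$, whereas gluing it at the decorated punctures $0$ or $\infty$, thereby modifying the input orbit $\gamma_1$ or the output orbit $\gamma_2$, gives the two contraction terms. The buildings made of two $S^1$-parametrized floors are the ones that would otherwise produce an unwanted quadratic $N\circ N$ term; here, however, the decoration rule forces the $S^1$-coordinates of the two floors to coincide at the connecting puncture, so the two angular forms $\ev^*_{-\infty}d\phi$ and $\ev^*_{+\infty}d\phi$ pulled back there are equal and $d\phi\wedge d\phi=0$ annihilates the contribution. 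This cancellation is precisely what makes $N$ a genuine $\mathcal{L}_X$-cocycle rather than a solution of a quadratic Maurer--Cartan-type relation, and I expect it to be the structural heart of the argument.

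For the invariance statement I would run the EGH cobordism argument in the $S^1$-parametrized, decorated setting. Counting rigid $S^1$-parametrized cylinders in the cylindrical cobordism $W=\overrightarrow{V^+V^-}$, with a coherent collection of sections connecting the two given ones, defines a $(1,1)$-tensor whose tensor-algebra lift is the chain map $d\varphi^\pm$ of the statement. Stokes on the associated one-dimensional cobordism moduli spaces, with the identical classification of the boundary, shows that $d\varphi^\pm(N^+)-N^-$ is $\mathcal{L}_{X^-}$-exact, the two-parametrized-floor terms dropping out again by $d\phi\wedge d\phi=0$. Passing to $CH(\mathcal{T}^{(1,1)}\mathbf{Q},\mathcal{L}_X)$ then gives $[N^+]\mapsto[N^-]$, so that the class of $N$ is an invariant of the contact structure.

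The step I expect to be the main obstacle is the precise matching of the mixed boundary strata with the three Lie-derivative terms --- in particular, producing the two contraction terms from the attachment of a contact-homology curve at the decorated punctures $0$ and $\infty$, and fixing all the signs arising from the gradings of $p_\gamma,q_\gamma,t^\alpha$ and from reordering the angular forms --- together with a careful justification of the decoration-matching that forces the pure $S^1$-parametrized contributions to vanish. Transversality and compactness of the decorated $S^1$-parametrized moduli spaces are taken for granted, in accordance with the standing remark deferring these analytic points to the polyfold program.
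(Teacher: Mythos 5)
Your global strategy (Stokes' theorem on the compactified $S^1$-parametrized moduli spaces, classification of the codimension-one boundary, and a decorated cobordism version for invariance) is the same as the paper's, but the key bookkeeping step is wrong, and it is exactly the step you yourself single out as the structural heart. In the decorated compactification of \cite{BEHWZ}, whenever a two-floor degeneration separates the punctures $0$ and $\infty$, \emph{both} floors are $S^1$-parametrized, with the $S^1$-coordinates matching at the connecting puncture; there is no boundary stratum in which an ordinary, unparametrized contact-homology curve is glued ``at the decorated puncture $0$ or $\infty$'' (the puncture $0$ is always the positive puncture of the top floor, and $0$, $\infty$ are external punctures, not connecting ones, so one never glues at them). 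Consequently your list of strata is a miscount, and the two contraction terms $(\del_{\gamma_1}X^c)\,N_c^{\gamma_2}$ and $N_{\gamma_1}^c\,(\del_c X^{\gamma_2})$ cannot come from the strata you assign them to.

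What actually happens in the paper is that the two-parametrized-floor strata do \emph{not} vanish: they are precisely the source of the two contraction terms. The matching of decorations at the connecting orbit $\gamma$ is a fiber-product condition, and to factor the boundary integral into correlators one inserts a representative of the diagonal class in $H^*(\gamma\times\gamma)$, written as $\frac{1}{\kappa_\gamma}\bigl(d\phi_{\bar\gamma}\otimes 1 + 1\otimes d\phi_{\bar\gamma}\bigr)$. Each of the two summands constrains the red line on one floor at both of its ends (yielding a genuine $N$-correlator) and on the other floor at only one end; the floor with the half-constrained line has a free $S^1$ of decorations, and its count reduces to components and derivatives of the ordinary differential $X$. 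This is also the correct reason why no quadratic $N\circ N$ term appears: the diagonal class has degree one, so there is no $d\phi\otimes d\phi$ summand --- not because ``the two angular forms pulled back at the connecting puncture are equal and wedge to zero.'' Indeed, no angular form is pulled back at the connecting puncture in the boundary integrand at all: the integrand is the restriction of the closed form defining $N$, whose $d\phi$ factors sit only at $\gamma_1$, $\gamma_2$ and the ordinary negative punctures, so your wedge-vanishing argument is vacuous; taken at face value it would kill two of the three summands of $\mathcal{L}_X N$ and the Stokes identity would not close. A smaller inaccuracy of the same kind occurs in your invariance argument: the $S^1$-parametrized cylinders in the cobordism do not define $d\varphi^\pm$ (which is the tensor-algebra lift of the ordinary EGH map $\varphi^\pm$, counted by unparametrized curves in $W=\overrightarrow{V^+V^-}$); they define the correction tensor whose boundary analysis, again via the diagonal-class decomposition at decorated connecting punctures, yields the transformation rule $N^+\mapsto N^-$ on homology.
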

\begin{proof}
For the proof of the equation $\mathcal{L}_X N  = 0$ we need to apply Stokes theorem to codimension-one boundary of the moduli spaces involved in the definition of $N$, i.e. moduli spaces of punctured $\IP^1$ with a marked $\IR^+$ line connecting two punctures $0$ and $\infty$ mapped to orbits $\gamma_1$ and $\gamma_2$, at which we pull back $1$-forms from the underlying simple orbits. In the picture below we represent a moduli space by drawing the corresponding generic element (the curve with the marked red $\IR^+$ line) and we represent the constraining of the endpoints via $1$-forms by the small triangles (a triangle pointing towards an orbit $\gamma$ means that the red line is $S^1$-constrained at that orbit by integrating the pull-back of the form $d\phi_{\bar\gamma}$). Let us consider the three terms in the right hand side of the pictorial equation below. The first two terms correspond to curve degenerations forming the boundary components of type $\del'\CM^{S^1}$ described above. Here we use the fact that, when a curve splits at a puncture $\gamma$  through which the $\IR^+$ line passes, the $S^1$-parametrizations match and this constraint is expressed by pulling-back a representative of the diagonal class in $H^*(\gamma\times\gamma)$. Indeed, integrating a differential form over the fibered product $\del' \CM^{S^1}$ is equivalent to integrating over the cartesian product the same form times the pullback via evaluation maps of the diagonal form $\frac{1}{\kappa_\gamma}(d\phi_{\bar\gamma}\otimes 1 + 1\otimes d\phi_{\bar\gamma}$), which gives the constraints at the connecting orbit for the first two terms in the right-hand side of the picture. Here the factor $1/\kappa_\gamma$ compensates for the fact that we want the $S^1$-parametrizations to match in the source rational curves, whose punctures branch with order$\kappa_\gamma$ over the connecting orbit $\bar{\gamma}$.  The effect of pulling back the diagonal class at a connecting orbit is, hence, the appearence of two terms in the right hand side of the pictorial equation (corresponding to the two summands in the $1$-form), whose only difference is the direction of the triangle at the connecting orbit (integrating $(d\phi_{\bar\gamma}\otimes 1$ corresponds to constraining the upper $S^1$-parametrization or red line, while $1\otimes d\phi_{\bar\gamma}$  the lower one). Finally the third term represents boundary components of the type $\del'' \CM^{S^1}$, where the connecting orbit is disjoint from the red line representing the fixed $S^1$-parametrization.\\
\begin{center}
\includegraphics[height=9cm]{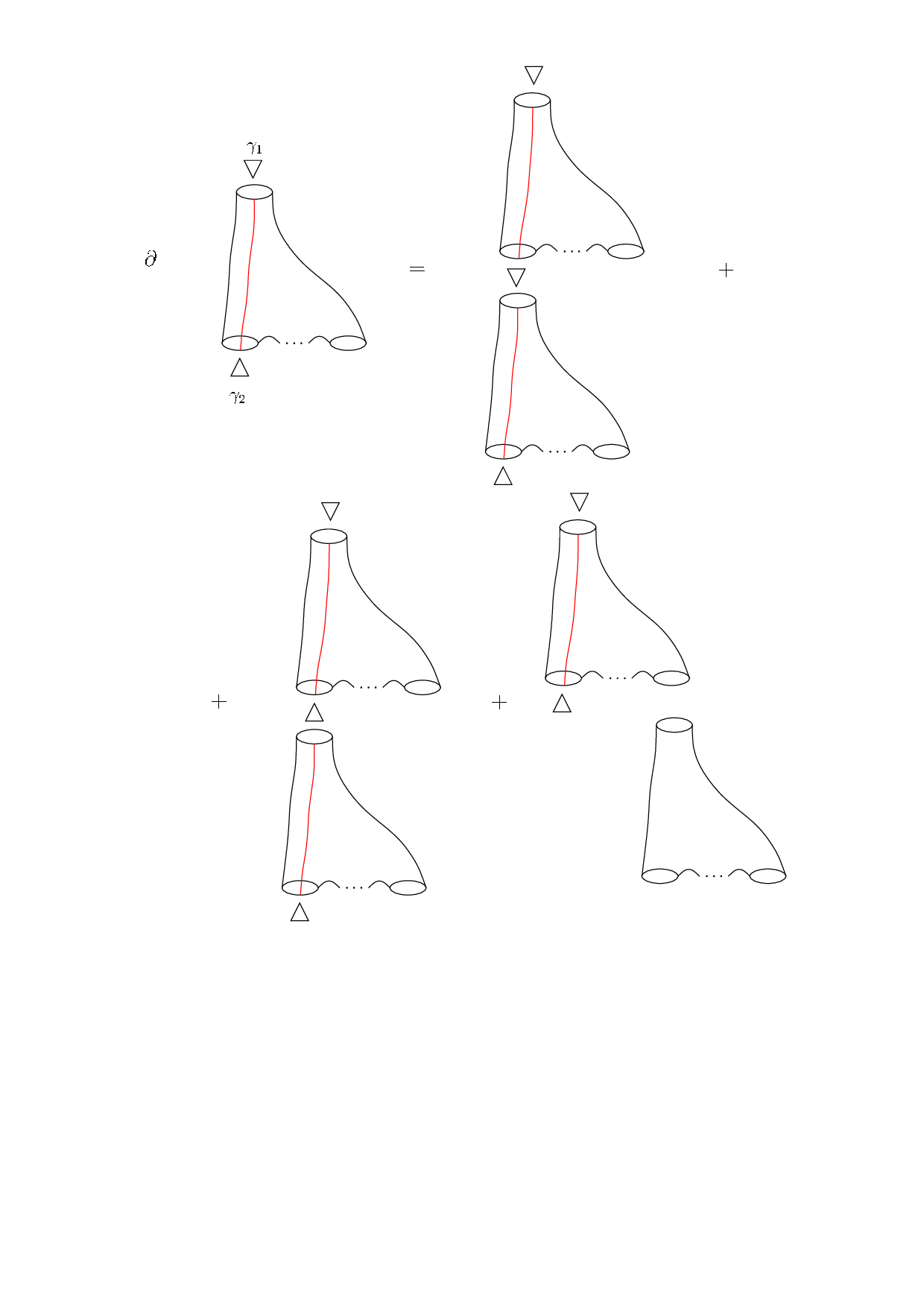}
\end{center}
All we need to notice, at this point, is that, taking orientation into account for the right signs, the three terms on the righ-hand side of the equation represented in the picture exactly correspond to the three summands in the coordinate expression of the (graded) Lie derivative $\mathcal{L}_XN$, which hence vanishes by Stokes theorem applied to the boundary of our moduli space.\\

For the second part of the theorem, about invariance with respect to auxiliary choices in the definition of $\mathcal{A}$, $X$ and $N$ a similar approach is needed, where we study the boundary of the moduli spaces of the same type of curves, but this time in the cobordism interpolating between two different choices of auxiliary data. Drawing again the same kind of pictures (only remembering, as explained in \cite{BEHWZ}, that the boundary of moduli spaces of connected curves in the cobordism is formed by $2$-floor curves in which one of the floors is a connected curve in the cylindrical manifold over one of the boundaries and the other is a possibly disconnected curve in the cobordism). Algebraically this gives precisely the transformation rule for $N$ described in the statement with respect to the lift $d\varphi^\pm$ of the isomorphism $\varphi^\pm$ to the homology tensor algebras of $\mathbf{Q^\pm}$.
\end{proof}

\begin{corollary}
For any $\alpha=1,\ldots,N$ and any $i=0,1,2,\ldots$
$$\mathcal{L}_{X_{\alpha,i}} N = 0\  \in CH(\mathcal{T}^{(1,1)}\mathbf{Q},\mathcal{L}_{X})$$
\end{corollary}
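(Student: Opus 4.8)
The plan is to reproduce, at the level of the $(1,1)$-tensor $N$, the argument recalled in Section~1 showing that the descendant Hamiltonians $\tilde{\Ih}_{\alpha,p}$ are cocycles: there one differentiates the master equation $\{\tilde{\Ih},\tilde{\Ih}\}=0$ in a descendant parameter to obtain $\{\tilde{\Ih},\tilde{\Ih}_{\alpha,p}\}=0$. Here the role of the master equation is played by a descendant enhancement of the identity $\mathcal{L}_X N=0$ just established. Concretely, let
\[
\tilde{X}=\sum_\gamma \kappa_\gamma \left.\frac{\del \tilde{\Ih}}{\del p_\gamma}\right|_{p=0}\frac{\del}{\del q_\gamma}
\]
be the descendant contact homology vector field, viewed as a function of all the parameters $t^{\alpha,j}$, so that $X=\tilde{X}|_{t^{\alpha,j}=0,\,j>0}$ and, by the definition of the descendant vector fields, $X_{\alpha,i}=\left.\frac{\del \tilde{X}}{\del t^{\alpha,i}}\right|_{t^{\beta,j}=0,\,j>0}$. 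In the same way I would introduce a descendant $(1,1)$-tensor $\tilde{N}$, defined by the integral formula for $N$ but with the $S^1$-parametrized moduli spaces $\CM^{S^1}$ replaced by their intersections with the zero loci of coherent collections of sections of the tautological line bundles at the marked points. Then $\tilde{N}$ depends on the full set of descendant parameters and $N=\tilde{N}|_{t^{\alpha,j}=0,\,j>0}$.

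The next step is to prove, at the chain level, the invariance identity
\[
\mathcal{L}_{\tilde{X}}\tilde{N}=0,
\]
the descendant generalization of $\mathcal{L}_X N=0$ from the Theorem. The proof is the same boundary analysis carried out there: the codimension-one boundary of the descendant $S^1$-parametrized moduli spaces consists of the same three types of two-floor degenerations, now carrying descendant constraints. By the coherence of the sections (recall $\LL_i|_{\CM_1\times\CM_2}=\pi_1^*\LL_{i,1}$ over such a boundary component), these constraints distribute over the two floors exactly as needed for the three boundary contributions to reassemble into the three summands of the coordinate expression of $\mathcal{L}_{\tilde{X}}\tilde{N}$; Stokes' theorem then yields the vanishing, and transversality of the sections to the zero section ensures that no further boundary strata are introduced.

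It then remains to differentiate. Applying $\frac{\del}{\del t^{\alpha,i}}$ to the identity $\mathcal{L}_{\tilde{X}}\tilde{N}=0$ and using the graded Leibniz rule for a parameter derivative acting on a Lie derivative --- the tensor analogue of the bracket Leibniz rule recalled in Section~1 --- I would obtain
\[
0=\mathcal{L}_{\frac{\del \tilde{X}}{\del t^{\alpha,i}}}\tilde{N}+(-1)^{|t^{\alpha,i}||\tilde{X}|}\,\mathcal{L}_{\tilde{X}}\frac{\del \tilde{N}}{\del t^{\alpha,i}}.
\]
Restricting to $t^{\beta,j}=0$ for $j>0$, the first term becomes $\mathcal{L}_{X_{\alpha,i}}N$ and the second becomes $\pm\,\mathcal{L}_X\!\left(\left.\frac{\del \tilde{N}}{\del t^{\alpha,i}}\right|_{t^{\beta,j}=0,\,j>0}\right)$. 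Thus $\mathcal{L}_{X_{\alpha,i}}N$ equals $\mathcal{L}_X$ applied to the genuine $(1,1)$-tensor given by the one-descendant $N$-correlator; it is therefore a coboundary in the complex $\left(\mathcal{T}^{(1,1)}\mathbf{Q},\mathcal{L}_X\right)$ and represents $0$ in $CH(\mathcal{T}^{(1,1)}\mathbf{Q},\mathcal{L}_X)$.

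I expect the main obstacle to be the chain-level identity $\mathcal{L}_{\tilde{X}}\tilde{N}=0$: one must verify, with correct orientations and signs, that the descendant constraints split coherently across the two floors of every boundary degeneration, so that the three resulting contributions match precisely the three terms of $\mathcal{L}_{\tilde{X}}\tilde{N}$ and nothing else survives. Once this geometric input is in place, the concluding differentiation is purely formal, being the same manipulation already used to produce the commuting descendant Hamiltonians.
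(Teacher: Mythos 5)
Your proposal is correct and follows essentially the paper's own route: the paper proves this corollary in one line---``expand the equation $\mathcal{L}_X N=0$ in powers of the $t$-variables and consider the linear terms''---which, read with the descendant variables $t^{\alpha,j}$ included, is precisely your differentiation of $\mathcal{L}_{\tilde{X}}\tilde{N}=0$ at $t^{\alpha,i}$, exhibiting $\mathcal{L}_{X_{\alpha,i}}N$ as the $\mathcal{L}_X$-coboundary of the one-descendant correlator $\left.\frac{\del \tilde{N}}{\del t^{\alpha,i}}\right|_{t^{\beta,j}=0,\,j>0}$. What you add explicitly---the descendant extension $\tilde{N}$ cut out by coherent collections of sections and the chain-level identity $\mathcal{L}_{\tilde{X}}\tilde{N}=0$---is exactly the step the paper leaves implicit (a literal, primary-only expansion of $\mathcal{L}_X N=0$ would only yield the case $i=0$), so your write-up is a sound filling-in of the same argument rather than a different method.
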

\begin{proof}
Simply expand the equation $\mathcal{L}_X N = 0$ in powers of the $t$-variables and consider the linear terms.
\end{proof}
\begin{example}
Consider the case $V=S^1$ with $t=t^1\theta_1 + \tau^1 \Theta_1$, $\theta_1=1$ and $\Theta_1=d\varphi$ where $\varphi$ is the angular coordinate on $S^1$.  It is easy to compute $\bar{N}:=N|_{\tau^1=0}$. Writing just $k$ for the index $k\gamma$ associated to the $k$-th multiple of the orbit $\gamma=V$, from the dimension formula for the moduli space of SFT-curves and an easy curve counting we immediately see that
$$\bar N_k^l=\frac{l-k}{k}q^{l-k},\hspace{1cm} l>k$$
$$\bar N_k^l=0,\hspace{2.2cm} l\leq k$$
Indeed, from dimension counting for $\CM^{S^1}$ we see that the only $0$-dimensional moduli spaces are those which contain branched covers of $\mathbb{P}^1$ with a single $l$-fold branch point over $\infty$ and two branch points, of branch numbers $k$ and $(l-k)$ over $0$ (we are of course identifying $S^1\times \IR$ with $\mathbb{P}^1\setminus \{0,\infty\}$). Once we have fixed these two zeros and one pole we have an $S^1$-worth of meromorphic functions to $\mathbb{P}^1$ (modulo real multiplicative factors, which is the $\IR$-action we are quotienting out), to which the $S^1$-parametrization (with respect to the positive and one of the negative punctures) and the position of the asymptotic marker over the other negative punctures are to be added. $\CM$ is hence a $S^1$-bundle over $T^2$. One $S^1$-degree of freedom is taken care of  by integrating $\ev_{-,1}^* d\varphi $ along the $S^1$-fibers (it gives a factor $(l-k)$, which cancels out with the denominator appearing in the definition $q=\sum_{\gamma\in\mathcal{P}} \frac{1}{\kappa_\gamma}q_\gamma [\gamma])$), while the further combinatorial factor $(l-k)$ comes from integrating the form $\ev_{+\infty,0}^* d\varphi \wedge \ev_{-\infty,\infty}^* d\varphi$ over the residual $T^2$ (this can be seen by reporting a fixed point $p\in V\times\{-\infty\}$ along a geodesic on the source $\mathbb{P}^1$ of our curve all the way to $V\times\{+\infty\}$: as the phase factor of our meromorphic function makes $k$ complete tours, the image of $p$ at  $V\times\{+\infty\}$ makes $(l-k)$ complete tours, so there are precisely $(l-k)$ phases for which a meridian from $0$ to $\infty$ in our source $\mathbb{P}^1$ is asymptotic to the same point $p$ at the two Reeb orbits at $\pm \infty$) and the denominator $k$ was directly in the definition of $N$.
\end{example}

\subsection{Vanishing of Nijenhuis torsion of $N$}
In a graded context like the one we work with, it is possible to define a graded version of the Nijenhuis torsion for an even vector valued one-form $N$ (see for instance \cite{ILMM}). Its definition is still
$$T(N)(X,Y)=[NX,NY]-N([NX,Y]+[X,NY])+N^2[X,Y]$$
where $X$ and $Y$ are now any two graded vector fields and the brackets are graded Lie brackets of graded vector fields.
\begin{theorem}\label{masterCH}
$$T(N)=0\in CH(\mathcal{T}^{(1,2)}\mathbf{Q},\mathcal{L}_{X})$$
\end{theorem}
\begin{proof}For the proof we need to apply Stokes theorem to the boundary of a new type of moduli space, namely a space of holomorphic curves with one positive and many negative punctures, three of which have special ``coupled'' asymptotic markers, namely the positive one and two of the negative ones. Let us give an idea of the generic element in such moduli space. In the interior of the moduli spaces we have maps from $(\mathbb{P}^1\setminus\{a,b,c,(z_j^-)\},(z_i))$ to $V\times \IR$. The special positive puncture is $a\in\IP^1$, and the two negative ones are $b\in\IP^1$ and $c\in\IP^1$. There are further negative punctures $(z^-_j)$ and marked points $(z_i)$. We have asymptotic markers at all puntcures, as usual, but the markers at $a,b$ and $c$ are mutually constrained (any marker determines the other two) in the following way. Choosing any parametrization of $\IP^1$ and given an asymptotic marker at $a$, we can map it to a marker at $b$ and a marker at $c$ along the only two arcs of circle $l_1, l_2$ issuing from $a$ in the tangent direction of its marker and passing through $b$ or $c$ respectively (notice that this construction is independent of the parametrization, as circles are mapped to circles by M\"obius transformations). We will use these asymptotic markers to define evaluation maps and pull back classes $d\phi_{\gamma_1}$, $d\phi_{\gamma_2}$ and $d\phi_{\gamma_3}$ at the orbits $\gamma_1$, $\gamma_2$ and $\gamma_3$ respectively. This moduli space is introduced because its boundary contains more familiar type of curves. We want to integrate differential forms over such boundary to obtain equations for the corresponding generating functions for moduli spaces we already defined before. Let us then study (what an ideal transversality result should give as) the codimension-one boundary of such moduli spaces. It is clear that every time a curve degenerates into a $2$-floor configurations and one special puncture is separated from the other two, we will have a matching condition for the $S^1$-parametrization at the connecting node/Reeb orbit. As we have seen above, we express this matching by pulling back the diagonal class at the connecting orbit. If the splitting into two floors leaves all of the special punctures on the same floor, then one floor will have a constrained parametrization as we described above, while the other one will be an ordinary unparametrized curve.\\

The following picture, with the usual notations, represents the possible $2$-floor degenerations in such moduli spaces. 

\begin{center}
\includegraphics[width=9.2cm]{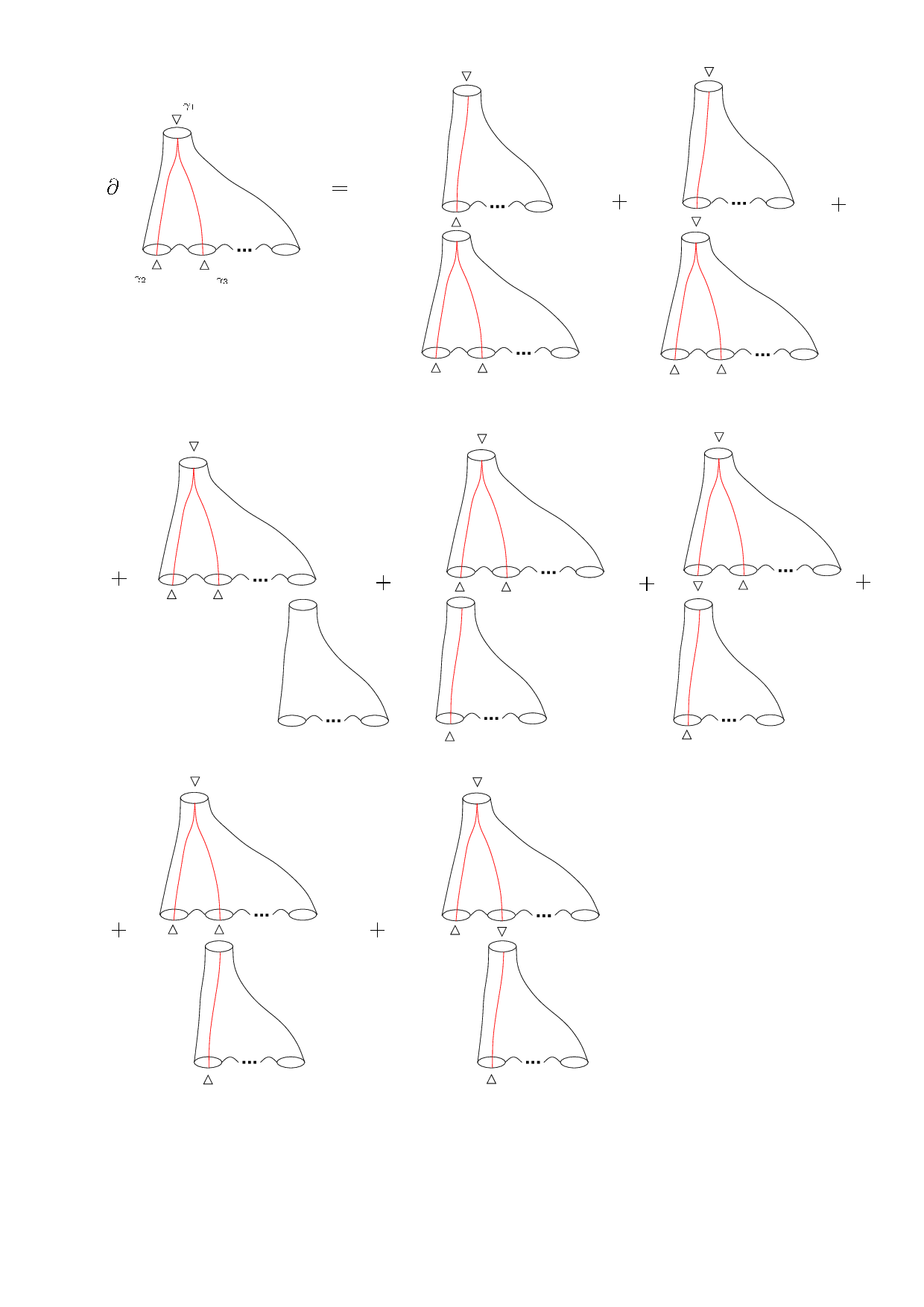}
\end{center}

The two red lines, with common tangent direction at the positive puncture $a\in\IP^1$, represent the image of $l_1$ and $l_2$. They determine coupled evaluation maps to $\gamma_1,\gamma_2$ and $\gamma_3$, along which we can then pull-back $d\phi_{\bar\gamma}$ classes. As usual the triangles point towards the orbit if we pull back a  $d\phi_{\bar\gamma}$ class (i.e. if we decide to constrain the marker) at that orbit. The effect of pulling back the diagonal class at a connecting orbit, as in the proof of the theorem above, is the appearence of two terms in the right hand side of the pictorial equation, whose only difference is the direction of the triangle at the connecting orbit (this happens specifically at the first two terms, or the fourth and fifth, or again at the sixth and seventh).\\

Let us now interpret the right hand side of the equation in terms of generating functions. The second, third, fourth and sixth term form the Lie derivative along $X$ of the $(1,2)$-tensor whose correlator counts the holomorphic curves with three special punctures described above, and hence disappear when taking homology with respect to $\mathcal{L}_X$. The first term spells out as $N^{\gamma_1}_\gamma\left(\frac{\del N^\gamma_{\gamma_2}}{\del q^{\gamma_3}}-\frac{\del N^\gamma_{\gamma_3}}{\del q^\gamma_2}\right)$. Indeed the curves represented in the lower floor of the first summand have two red lines on them, whose tangents at the positive puncture must match but are not otherwise constrained. This matching condition can be expressed (similarly to what happens for the matching condition of two red lines from two different floors at the connecting orbit) by pulling back to the moduli space the form $\frac{1}{\kappa_\gamma}(d\phi_{\bar\gamma}\otimes 1 - 1\otimes d\phi_{\bar\gamma}$)). This form represents the anti-diagonal class in $H^*(\gamma\times\gamma)$, the class of the diagonal in $S^1\times(-S^1)$, where the minus in the second factor comes from the fact that, because of our way of transporting asymptotic markers from $\gamma_2$ and $\gamma_3$ to $\gamma$, the induced maps between the corresponding Reeb orbits have opposite orientations). We therefore get the term $\left(\frac{\del N^\gamma_{\gamma_2}}{\del q^{\gamma_3}}-\frac{\del N^\gamma_{\gamma_3}}{\del q^\gamma_2}\right)$, where this difference corresponds to the difference in the above anti-diagonal in the following way. Each summand of the anti-diagonal form $S^1$-constrains the positive end of one of the two red lines, leaving the other one free. Considering that such lines are both $S^1$-constrained at their negative ends, we end up with a doubly $S^1$-constrained line, while we can just forget the other one, which is $S^1$-constrained just once. Hence, of the three indices involved in the picture (the three punctures with red lines on them), two of them are indices of $N$ (doubly constrained line) and the third is the index of a simple $q$-derivative (a marked negative puncture). We promptly recognize this as the first term of the coordinate expression (\ref{torsion}) of the Nijenhuis torsion of $N$. Finally, the remaining two summands, the fifth and the seventh, give (up to signs corresponding to the grading of $q$-variables) the remaining part of (\ref{torsion}) and we can conclude that, up to $\mathcal{L}_X$-homology, $T(N)=0$.\\
\end{proof}

\begin{corollary}
For any $Y\in CH(\mathcal{T}^{(1,0)}\mathbf{Q},\mathcal{L}_{X})$,
$$\mathcal{L}_Y N=0 \ \in CH(\mathcal{T}^{(1,1)}\mathbf{Q},\mathcal{L}_{X})\hspace{0.5cm} \Rightarrow\hspace{0.5cm}  \mathcal{L}_{N(Y)} N=0 \ \in CH(\mathcal{T}^{(1,1)}\mathbf{Q},\mathcal{L}_{X})$$
\end{corollary}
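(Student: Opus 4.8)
The plan is to recognize this as the homological incarnation of the classical Magri--Fuchssteiner hereditariness property: vanishing of the Nijenhuis torsion forces $N$ to carry symmetries of $N$ to symmetries of $N$. Concretely, I would isolate the purely algebraic identity that exhibits the torsion as the obstruction to this property, and then push it through the homology functor $CH(\mathcal{T}^{(\bullet,\bullet)}\mathbf{Q},\mathcal{L}_X)$ using Theorem \ref{masterCH}.

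First I would record the chain-level (tensor-field) identity. Using $(\mathcal{L}_W N)(Z) = [W, N(Z)] - N([W,Z])$ for vector fields $W,Z$ and expanding, one finds, for all vector fields $Y,Z$,
\begin{equation*}
(\mathcal{L}_{N(Y)} N)(Z) - N\bigl((\mathcal{L}_Y N)(Z)\bigr) = [N(Y),N(Z)] - N[N(Y),Z] - N[Y,N(Z)] + N^2[Y,Z] =: \mathcal{N}(Y,Z),
\end{equation*}
whose right-hand side is exactly the Nijenhuis torsion appearing in coordinates in Theorem \ref{masterCH}. Fixing $Y$ and letting $Z$ vary, this reads as an identity of $(1,1)$-tensors
\begin{equation*}
\mathcal{L}_{N(Y)} N = N\circ(\mathcal{L}_Y N) + \iota_Y\mathcal{N},
\end{equation*}
where $\iota_Y$ contracts the first covariant slot of the $(1,2)$-tensor $\mathcal{N}$ against $Y$. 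This is valid before passing to homology, with the Koszul signs dictated by $|N|=-2$ (even) and the odd degree of $\mathcal{L}_X$.

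Next I would pass to homology. Since $Y$ is a cycle ($\mathcal{L}_X Y = [X,Y] = 0$) and $N$ is a cycle ($\mathcal{L}_X N = 0$, the content of the theorem establishing $N\in CH(\mathcal{T}^{(1,1)}\mathbf{Q},\mathcal{L}_X)$), the vector field $N(Y)$ is again a cycle, so $\mathcal{L}_{N(Y)} N$ defines a class in $CH(\mathcal{T}^{(1,1)}\mathbf{Q},\mathcal{L}_X)$. Moreover, because $\mathcal{L}_X$ is a graded derivation of composition and contraction, both $N\circ(\,\cdot\,)$ and $\iota_Y(\,\cdot\,)$ send $\mathcal{L}_X$-exact tensors to $\mathcal{L}_X$-exact tensors: if $\mathcal{L}_Y N = \mathcal{L}_X T$ then $N\circ\mathcal{L}_X T = \mathcal{L}_X(N\circ T)$ (using $\mathcal{L}_X N=0$ and $|N|$ even), and similarly $\iota_Y\mathcal{L}_X R = \pm\,\mathcal{L}_X(\iota_Y R)$ (using $\mathcal{L}_X Y = 0$). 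Reading the identity in homology, the last term $\iota_Y\mathcal{N}$ vanishes by Theorem \ref{masterCH}, and the middle term $N\circ(\mathcal{L}_Y N)$ vanishes by the hypothesis $\mathcal{L}_Y N = 0$ together with the preceding remark; hence $\mathcal{L}_{N(Y)} N = 0$ in $CH(\mathcal{T}^{(1,1)}\mathbf{Q},\mathcal{L}_X)$.

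The only genuinely delicate point I expect is the graded sign bookkeeping: one must check that the Fuchssteiner identity holds with the correct Koszul signs once $\mathcal{L}_X$ is odd and $N$ has even degree $-2$, and that $\iota_Y$ and $N\circ(\,\cdot\,)$ are chain maps up to the expected sign rather than on the nose. Once a sign convention compatible with the bracket on $\mathcal{T}^{(1,0)}\mathbf{Q}$ used throughout is fixed, the remainder is the classical computation that $\mathcal{N}=0$ implies hereditariness of $N$, now carried out inside the homology of $(\mathcal{T}^{(\bullet,\bullet)}\mathbf{Q},\mathcal{L}_X)$.
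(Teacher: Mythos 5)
Your proof is correct and follows essentially the same route as the paper, whose entire proof is the instruction to spell out in components the difference of the two Lie derivatives and observe that it is proportional to the torsion of Theorem \ref{masterCH}. Your write-up is simply the careful version of that one-liner: you state the Fuchssteiner identity $\mathcal{L}_{N(Y)}N = N\circ(\mathcal{L}_Y N) + \iota_Y\mathcal{N}$ precisely (note the paper's phrasing omits the composition with $N$ in the middle term) and verify the point the paper leaves implicit, namely that $N\circ(\,\cdot\,)$ and $\iota_Y$ preserve $\mathcal{L}_X$-exactness since $X$, $Y$ and $N$ are all cycles at chain level, so the identity descends to $CH(\mathcal{T}^{(1,1)}\mathbf{Q},\mathcal{L}_X)$.
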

\begin{proof}
Simply spell out in components the difference $\mathcal{L}_Y N -  \mathcal{L}_{N(Y)} N$ to see that it is proportional to the left-hand side of the master equation for $N$.
\end{proof}

\subsection{Descendant vector fields and $N$-recursion}
The following result shows how the non-equivariant Nijenhuis endomorphism $N$ is related to the geometry of gravitational descendants and the combined knowledge of the primary vector fields $X_{\alpha,0} \in CH(\mathcal{T}^{(1,0)}\mathbf{Q},\mathcal{L}_{X})$ and of the endomorphism $N \in CH(\mathcal{T}^{(1,1)}\mathbf{Q},\mathcal{L}_{X})$ allows for completely recovering all of the descendant vector fields $X_{\alpha,i} \in CH(\mathcal{T}^{(1,0)}\mathbf{Q},\mathcal{L}_{X})$, $i>0$.\\

\begin{theorem}\label{omegarecursionCH}
$$X_{\alpha,n}=N(X_{\alpha,n-1})+C^\mu_{\alpha,n-1} X_{\mu,0}\ \in CH(\mathcal{T}^{(1,0)}\mathbf{Q},\mathcal{L}_{X})$$
where
$$C^\mu_{\alpha,n}=C^\mu_{\alpha,n}(t)=\frac{\del^2}{\del t^\alpha \del t^\nu} \int_V \frac{t^{\wedge (n+3)}}{(n+3)!}\ \eta^{\nu \mu}$$
\end{theorem}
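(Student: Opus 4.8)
The plan is to prove the recursion relation $X_{\alpha,n}=N(X_{\alpha,n-1})+C^\mu_{\alpha,n-1} X_{\mu,0}$ by exhibiting it as the algebraic shadow of a geometric identity on the boundary of a single, carefully chosen moduli space. The guiding principle, following the proofs of Theorem \ref{masterCH} and of $\mathcal{L}_X N = 0$, is that every term in a recursion among contact homology vector fields should arise as a distinct codimension-one boundary stratum of a moduli space of $S^1$-parametrized curves carrying one extra marked point which supports a power of the descendant ($\psi$-class) constraint. First I would set up the moduli space $\CM^{S^1}_{r,n^-,A}(\gamma_1,\gamma_2)$ used to define $N$, but now with an additional marked point constrained by $\theta_\alpha$ together with the $(n-1)$-st power of the first Chern class of the tautological line bundle $\LL$ at that point (realized via a coherent collection of sections), thereby building the descendant datum $X_{\alpha,n-1}$ into the interior count. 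The positive puncture at $0$ and the puncture at $\infty$ carry the two $S^1$-asymptotic-marker constraints (the $d\phi_{\bar\gamma}$ forms) exactly as in the definition of $N$.

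The heart of the argument is the analysis of how the descendant line bundle and the $S^1$-decoration interact at the codimension-one boundary. Here I would invoke the coherency condition from Definition 1.x: when the curve splits into two floors, $\LL_i|_{\CM_1\times\CM_2}=\pi_1^*\LL_{i,1}$, so the Chern class concentrates on the floor containing the marked point. The two distinct ways the $\psi$-class can distribute across a splitting are what produce the two terms on the right-hand side: in one family of strata the $\infty$-puncture (with its decoration) and the descendant marked point land on different floors, so that the $S^1$-matching at the connecting orbit contributes the factor encoding the $(1,1)$-tensor $N$ acting on a descendant vector field of one lower order, giving $N(X_{\alpha,n-1})$; in the complementary strata the descendant constraint degenerates onto a constant/ghost configuration, and the standard Gromov-Witten comparison for $\psi$-classes on the stable curve produces the correction $C^\mu_{\alpha,n-1}X_{\mu,0}$. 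The explicit coefficient $C^\mu_{\alpha,n}=\frac{\del^2}{\del t^\alpha \del t^\nu}\int_V \frac{t^{\wedge(n+3)}}{(n+3)!}\,\eta^{\nu\mu}$ I would identify by recognizing this degenerate stratum as governed purely by the target geometry of $V$ (intersection theory on $V$ against powers of $t$), exactly as the constant-map contributions in the string/dilaton/divisor equations of Theorem \ref{SDD}; the factor $1/(n+3)!$ and the degree shift reflect the dimension of the moduli of the ghost component carrying the $\psi^{n-1}$ constraint.

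I expect the main obstacle to be pinning down the precise combinatorial bookkeeping at the boundary: namely, showing that the boundary strata organize into exactly these two types with the correct multiplicities and signs, and in particular justifying that all genuinely ``mixed'' or higher-codimension contributions either cancel or assemble into $\mathcal{L}_X$ of some $(1,0)$-tensor (hence vanish after passing to $CH(\mathcal{T}^{(1,0)}\mathbf{Q},\mathcal{L}_X)$). This is where the absence of non-constant nodal configurations in contact homology—emphasized at the start of this section via the maximum principle—is essential: it guarantees that the only nodal degenerations are the constant ghost bubbles responsible for the $C^\mu_{\alpha,n-1}$ term, so the analysis closes. Once the strata are correctly enumerated, the identity follows from Stokes' theorem applied to the oriented boundary of the moduli space, precisely as in the proofs above, with the recursion obtained by reading off the surviving terms modulo the image of $\mathcal{L}_X$. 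Finally, the invariance of the statement in $CH(\mathcal{T}^{(1,0)}\mathbf{Q},\mathcal{L}_X)$ is inherited from the invariance of $N$ and of the $X_{\alpha,i}$ already established.
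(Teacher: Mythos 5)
Your overall strategy (Stokes' theorem on a one-dimensional moduli space carrying a descendant-decorated marked point, coherency of sections controlling how the $\psi$-power distributes over two-floor splittings, ghost bubbles producing the $C$-coefficients, and $\mathcal{L}_X$-exact terms dying in homology) is the right philosophy, but the moduli space you chose cannot yield this identity, and the two ingredients that actually drive the paper's proof are missing. You build $\psi^{n-1}$ into the $N$-moduli space $\CM^{S^1}_{r,n^-,A}(\gamma_1,\gamma_2)$, whose punctures at $0$ and $\infty$ both carry $d\phi$-constraints; every count such a space produces is of $(1,1)$-tensor type, while the identity to be proved relates $(1,0)$-vector fields, and you never explain how the second index is removed. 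More seriously, nothing in your boundary analysis can produce the left-hand side $X_{\alpha,n}$: by coherency the power $\psi^{n-1}$ is preserved across two-floor splittings, and ghost bubbles can only absorb $\psi$-powers, so descendant order $n$ never appears anywhere in your strata, and Stokes would give an identity among order-$(n-1)$ objects only. The paper instead uses a contact homology curve with one special puncture (the positive one) and \emph{two} special marked points, one carrying $\psi^{n}\theta_\alpha$ (order $n$, not $n-1$) and one carrying the unit class $1\in H^*(V)$, fixed at $0,1,\infty$ so that the $\IR^+$-line induces the asymptotic-marker constraint at the positive puncture. Its boundary analysis yields, after passing to homology,
$$\Bigl( X^{\gamma_1}_{\alpha,n}\,\delta_{\gamma_1}^{\gamma} - \frac{\del X^{\gamma_1}_{\alpha,n}}{\del t^1}\, N_{\gamma_1}^{\gamma} - \frac{\del C^{\mu}_{\alpha,n}}{\del t^1}\, X^{\gamma}_{\mu,0}\Bigr)\frac{\del}{\del q^{\gamma}} = 0,$$
and the recursion with the shift $n\mapsto n-1$ is obtained only at the very end via the string equation of Theorem \ref{SDD}, which converts $\del_{t^1} X_{\alpha,n}$ into $X_{\alpha,n-1}$ and $\del_{t^1} C^{\mu}_{\alpha,n}$ into $C^{\mu}_{\alpha,n-1}$. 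The unit-class insertion and the string-equation step are precisely the engine of the descendant shift; your proposal contains neither.

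A second concrete failure: your ghost-bubble stratum is not codimension one in your setup. In the paper the constant sphere-bubble carrying the two special marked points is a codimension-one phenomenon exactly because both points are confined to the $\IR^+$-line, which is itself $S^1$-constrained at the positive Reeb orbit: the degeneration is the limit in which the unit-class point slides along the line into the descendant point. With only one special marked point on your $N$-cylinder, a collision producing a ghost requires two real parameters to degenerate and is codimension two, so the stratum you invoke to produce $C^{\mu}_{\alpha,n-1}X_{\mu,0}$ simply does not occur in the boundary. Relatedly, your normalization heuristic is off: the coefficient comes from $\int_{\CM_{0,r}}\psi^{n}=1$ for $r=n+3$ with the \emph{full} power $\psi^{n}$ on the ghost (whence $t^{\wedge(n+3)}/(n+3)!$ and the index $C^{\mu}_{\alpha,n}$), becoming $C^{\mu}_{\alpha,n-1}$ only after the $\del_{t^1}$ supplied by the string equation, whereas a ghost carrying $\psi^{n-1}$ as in your setup would produce the wrong coefficient.
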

\begin{proof}
Once more we need to study the codimension-$1$ boundary of a moduli space of curves. In this case we consider contact homology curves with three special points: the positive puncture at the orbit $\gamma$, a marked point at which we pull back the unity class $1\in H^*(V)$ and another marked point carrying the $n$-nth descendant of the class $\theta_\alpha\in H^*(V)$ (and no other point carries gravitational descendants). Mapping these three points to $\{0,1,\infty\}\in \IP^1$ we obtain an asymptotic direction at the positive puncture given by the $\IR^+$-line in $\IP^1$ and we constrain such direction as usual via the asymptotic marker at the corrisponding positive Reeb orbit.

\begin{center}
\includegraphics[width=11.5cm]{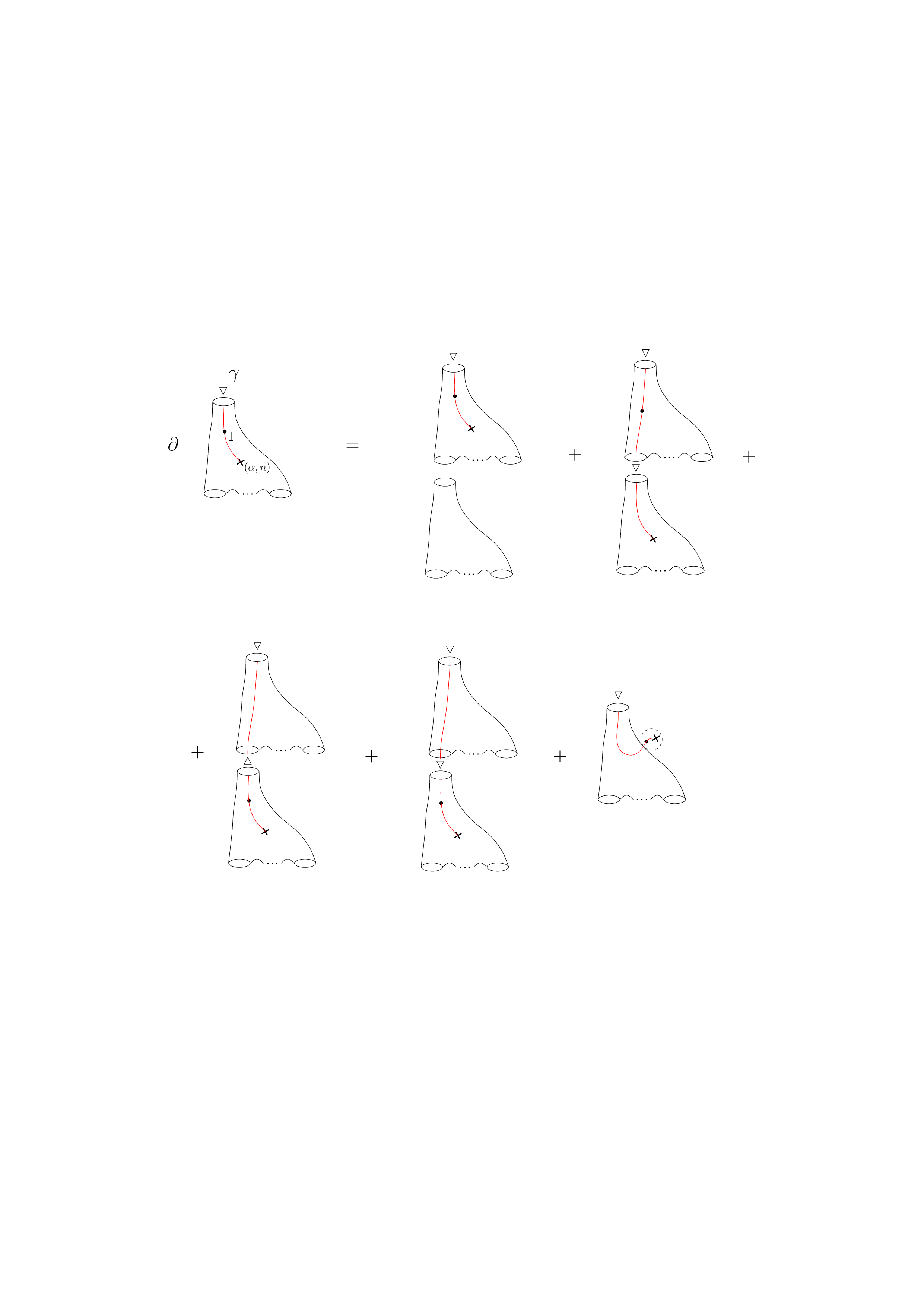}
\end{center}

In the usual way the above picture shows the different types of codimension-$1$ boundary degeneration for such moduli space. We are already familiar with the first four terms of the right-hand side: they represent all possible $2$-floor degenerations of the $1$-floor curve on the right hand side. Notice however that, since the two special marked points are constrained to a line which is in turn $S^1$-constrained at the Reeb orbit, a special kind of codimension-$1$ phenomenon appears, which is not anymore a $2$-floor curve, but is instead a $1$-floor curve with a constant sphere-bubble carrying the two special marked points (which corresponds to the limit where the point carrying the class $1\in H^*(V)$ moves along the $\IR^+$-line to reach the other marked point carrying the descendant), represented as the last term in the right-hand side in the picture. It is easy to convince one-self, by dimension counting of the moduli of each of the two components, that this nodal configuration is a codimension-$1$ phenomenon, but of course, once more, in order to have a rigorous result, the appropriate transversaility and gluing theorems (making this moduli space a well-behaved manifold with corners) are needed.\\

Now we notice that the first and fourth terms in the right-hand side correspond to the Lie derivative along $X$ of a vector field on $\mathbf{Q}$ whose component along $\frac{\del}{\del q^\gamma}$ are given by the correlator counting the curves described above. Notice further that the factor corresponding to top floor in the second summand is zero unless the curve is a constant cylinder, because the marked point carrying the class $1\in H^*(V)$ is always unconstrained along the red line and the only way to achieve a zero-dimensional moduli space is by quotienting out the vertical symmetry in constant cylinders over the Reeb orbit $\gamma$ . Taking homology, what is left can be spelled out as
$$\left( X^{\gamma_1}_{\alpha,n} \delta_{\gamma_1}^\gamma - \frac{\del X^{\gamma_1}_{\alpha,n}}{\del t^1} N_{\gamma_1}^\gamma - \frac{\del C^\mu_{\alpha,n}}{\del t^1} X^\gamma_{\mu,0}\right)\frac{\del}{\del q^\gamma}=0\ \in CH(\mathcal{T}^{(1,0)}\mathbf{Q},\mathcal{L}_{X}) $$
where $C^\mu_{\alpha,n}$ is the term accounting for the constant bubbles with one psi-class to the power $n$. Such term is easily calculated from the well known fact that, on the Deligne-Mumford space of genus $0$ curves with $r$ marked points,
$$\int_{\CM_{0,r}} \psi_i^n = \left\{\begin{array}{c}1,\ r=n+3\\0,\ r\neq n+3\end{array}\right. $$
Finally we need to use the string equation of Theorem \ref{SDD} on the above equation to obtain the statement.
\end{proof}

\begin{corollary}\label{corollaryCH}
$$ X_{\alpha,n}= \sum_{k=0}^n C^\mu_{\alpha,n-k-1}\ N^k(X_{\mu,0}) \ \in CH(\mathcal{T}^{(1,0)}\mathbf{Q},\mathcal{L}_{X}) $$
where
$$C^\mu_{\alpha,n}=C^\mu_{\alpha,n}(t)=\frac{\del^2}{\del t^\alpha \del t^\nu} \int_V \frac{t^{\wedge (n+3)}}{(n+3)!}\ \eta^{\nu \mu}$$
\end{corollary}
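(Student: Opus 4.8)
The plan is to prove the corollary by induction on $n$, treating the recursion of Theorem \ref{omegarecursionCH} as the inductive step; the statement is then nothing more than the closed-form solution of that linear recursion. First I would dispose of the base case $n=0$, where the right-hand side collapses to the single summand $C^\mu_{\alpha,-1}X_{\mu,0}$, so that I need to verify $C^\mu_{\alpha,-1}=\delta^\mu_\alpha$. Expanding the defining integral with $t=\sum_\beta t^\beta\theta_\beta$ gives $\int_V \frac{t^{\wedge 2}}{2!}=\frac{1}{2}\sum_{\beta,\sigma} t^\beta t^\sigma \eta_{\beta\sigma}$, where $\eta_{\beta\sigma}=\int_V \theta_\beta\wedge\theta_\sigma$ is the Poincaré pairing; applying $\frac{\del^2}{\del t^\alpha\del t^\nu}$ yields $\eta_{\alpha\nu}$, and contracting with the inverse pairing $\eta^{\nu\mu}$ leaves $\delta^\mu_\alpha$. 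Hence the $n=0$ case reduces to the tautology $X_{\alpha,0}=X_{\alpha,0}$.

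For the inductive step I would assume the closed formula at index $n-1$, namely $X_{\alpha,n-1}=\sum_{k=0}^{n-1} C^\mu_{\alpha,n-k-2}\,N^k(X_{\mu,0})$, and substitute it into the recursion $X_{\alpha,n}=N(X_{\alpha,n-1})+C^\mu_{\alpha,n-1}X_{\mu,0}$. The single observation that makes the argument run is that each coefficient $C^\mu_{\alpha,j}$ is a function of the parameters $t$ alone and not of the coordinates $q$ on $\mathbf{Q}$; since $N$ is a $(1,1)$-tensor it is $C^\infty(\mathbf{Q})$-linear, so these coefficients pass freely through $N$, giving $N\!\left(C^\mu_{\alpha,n-k-2}\,N^k(X_{\mu,0})\right)=C^\mu_{\alpha,n-k-2}\,N^{k+1}(X_{\mu,0})$. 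Reindexing the image of the hypothesis by $j=k+1$ produces $\sum_{j=1}^{n}C^\mu_{\alpha,n-j-1}\,N^j(X_{\mu,0})$, while the extra term $C^\mu_{\alpha,n-1}X_{\mu,0}$ supplies precisely the missing $j=0$ summand $C^\mu_{\alpha,n-0-1}\,N^0(X_{\mu,0})$. Combining the two yields $\sum_{j=0}^{n}C^\mu_{\alpha,n-j-1}\,N^j(X_{\mu,0})$, which is the asserted identity.

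Because the whole manipulation takes place inside $CH(\mathcal{T}^{(1,0)}\mathbf{Q},\mathcal{L}_X)$, where both the recursion of Theorem \ref{omegarecursionCH} and the equation $\mathcal{L}_X N=0$ already hold, no new analytic input is required and the argument is purely formal. I do not anticipate a genuine obstacle: the only points demanding care are the base-case evaluation $C^\mu_{\alpha,-1}=\delta^\mu_\alpha$ and the grading bookkeeping in the scalar-through-$N$ step. The latter is harmless, since $C^\mu_{\alpha,j}$ enters as an even (commuting) scalar and all signs are inherited directly from the already-signed recursion. Finally I would emphasize that the index shift in $C$ (the appearance of $n-k-1$ rather than $n-k$) is forced by the normalization $C^\mu_{\alpha,-1}=\delta^\mu_\alpha$ and must be tracked consistently through the induction, as it is exactly what guarantees the $k=0$ term reproduces $X_{\alpha,n}$ at $N$-degree zero.
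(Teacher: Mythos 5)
Your proof is correct and is essentially the paper's own argument: the paper's entire proof is ``just apply Theorem \ref{omegarecursionCH} $n$ times to $X_{\alpha,n}$,'' which is precisely the iteration you formalize as an induction. Your explicit verification of the normalization $C^\mu_{\alpha,-1}=\delta^\mu_\alpha$ (which the unwinding needs so that the $k=n$ term $C^\mu_{\alpha,-1}N^n(X_{\mu,0})$ accounts for $N^n(X_{\alpha,0})$) is a detail the paper leaves implicit, and your computation of it via $\int_V \frac{t\wedge t}{2}$ and the inverse Poincar\'e pairing is right.
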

\begin{proof}
Just apply Theorem \ref{omegarecursionCH} $n$ times to $X_{\alpha,n}$.
\end{proof}
Naturally the above theorem and corollary hold for any choice of auxiliary data given the completely covariant behaviour of the equations. In particular the above corollary shows how the descendant vector fields $X_{\alpha,n}$ are expressed in closed form in terms of the primary vector fields $X_{\alpha,0}$ and the endomorphism $N$.
\begin{example}
Consider again the example of $V=S^1$. In this case we have $\bar{X}:=X|_{\tau^1=0}=0$ and $\bar{X}_{1,n}:= \left. \frac{\del X}{\del \tau^{1,n}}\right|_{\tau=0}$, with $\bar{X}^k_{1,0}=k q^k$, where as before we write $k$ for the index $k\gamma$ associated to the $k$-th multiple of the orbit $\gamma=V$.\\

Applying the above Theorem \ref{omegarecursionCH} we obtain
$$\bar{X}^l_{1,1}=\bar{X}^k_{1,0} \bar{N}_k^l + \bar{C}^1_{1,0} \bar{X}^l_{1,0}$$.
Here $\bar{C}^1_{1,n}=C^1_{1,n}|_{\tau^1=0}=\frac{(t^1)^{n+1}}{(n+1)!}$, hence we obtain
\begin{equation*}
\begin{split}
\bar{X}^l_{1,1}&=l\ t^1 q^l + \sum_{0<k<l} (l-k) q^k q^{l-k}=\\
&=l\ t^1 q^l + \frac{1}{2} \left(\sum_{0<k<l} (l-k) q^k q^{l-k} + \sum_{0<k'<l} k' q^{-k'+l}q^{k'}\right)= \\
&=l\ t^1 q^l + \frac{l}{2} \sum_{0<k<l} q^k q^{l-k}
\end{split}
\end{equation*}
and, with the same procedure we obtain, by Corollary \ref{corollaryCH} and denoting $q^0:=t^1$,
$$\bar{X}^l_{1,n}=\frac{l}{(n-1)!} \sum_{\substack{k_1,\ldots,k_n\geq0\\k_1+\ldots+k_n=l}} q^{k_1} \ldots q^{k_n}$$
(here one needs to use the following trick
\begin{equation*}
\begin{split}
\sum_{\substack{k_1,\ldots,k_n\geq0\\k_1+\ldots+k_n\leq l}} &(l-k_1-\ldots-k_n)\  q^{k_1}\ldots q^{k_n} q^{l-k_1-\ldots-k_n}=\\
=\frac{1}{n} &\left(\sum_{\substack{k'_1,k_2,\ldots,k_n\geq0\\k'_1+k_2+\ldots+k_n\leq l}} k'_1 q^{l-k'_1-k_2-\ldots-k_n}+\right.\\
&\hspace{0.5cm}+\ldots+\left. \sum_{\substack{k_1,\ldots,k_{n-1},k'_n\geq0\\k_1+\ldots+k_{n-1}+k'_n\leq l}} k'_n q^{l-k_1-\ldots-k_{n-1}-k'_n} \right)=\\
\frac{l}{n} &\sum_{\substack{k_1,\ldots,k_n\geq0\\k_1+\ldots+k_n\leq l}}   q^{k_1}\ldots q^{k_n} q^{l-k_1-\ldots-k_n}
\end{split}
\end{equation*}
to take the numerical coefficient out of the sum).
\end{example}

\section{$\omega$-recursion in rational SFT}

An approach similar to the one we used above for contact homology should also work in the case of full rational SFT, the main difference coming from the presence of non-constant nodal curves which is very naturally incorporated in the algebraic formalism of Lie derivatives and tensor fields by trading the Nijenhuis operator $N$ for a bivector $\omega$ well defined on SFT homology. This section should be seen as a research announcements of the algebraic results that follow from assuming that the anlysis of the previous sections carries over without troubles to the more general context of SFT of contact manifolds (or even more general stable Hamiltonian structures). In particular this means that we are introducing new moduli spaces of $S^1$-parametrized rational curves and we are assuming the same type of degenerations of such curves that we have studied above, also serve as compactification (with the appropriate transversality and gluing theorems) in this case (with the substantial difference that nodal degenerations inside a given floor separating the top and bottom puncture of the given $S^1$-parametrized cylinder happen in codimension $1$, as explained below).

\subsection{The $\omega$ bivector in rational SFT}
For a target contact manifold $V$ and compatible cylindrical almost complex structure $J$ on $V\times \IR$, consider the following moduli spaces of punctured $S^1$-parametrized cylinders with marked points. We start with the fully parametrized space  $$\IM^{S^1,0}_{r,A}((\gamma_0,\pm),(\gamma_\infty,\pm),\Gamma^+,\Gamma^-))$$ consisting of tuples $(u,(z_k^{\pm}),(z_i))$, where $(z^{+}_k)$,$(z^{-}_j)$,$(z_i)$ are three disjoint ordered sets of points on $\IP^1 \setminus (\{0,\infty\})=S^1\times\IR$ (positive and negative punctures, and $r$ additional marked points). The map $u: \Si \to \IR \times V$  from the punctured Riemann surface $\Si = \IP^1 \setminus (\{0,\infty\} \cup  \{(z_k^{+})\} \cup \{(z_k^{+})\})$ is required to satisfy the Cauchy-Riemann equation
\begin{equation*}
 \CR_{\Ju} u = du + \Ju(u) \cdot du \cdot i = 0
\end{equation*}
with respect to the complex structure $i$ on $\IP^1$. Assuming we have chosen cylindrical coordinates $\psi^{\pm}_k: \IR^{\pm}
\times S^1 \to \Si$ around each puncture $z^{\pm}_k$, in the sense that $\psi_k^{\pm}(\pm\infty,t)=z_k^{\pm}$,
the map $u$ is additionally required to show for all $k=1,...,n^{\pm}$ the
asymptotic behaviour
\begin{equation*}
 \lim_{s\to\pm\infty} (u \circ \psi^{\pm}_k) (s,t+t_0) =
 (\pm \infty,\gamma^{\pm}_k(T^{\pm}_kt))
\end{equation*}
with $t_0\in S^1$ and with orbits $\gamma^{\pm}_k\in \Gamma^{\pm}$, where $T^{\pm}_k>0$ denotes period of $\gamma^{\pm}_k$, and analogous asymptotic behaviour at $0$ and $\infty$ for $s\to\pm\infty$ (the signs here correpond to the signs in $(\gamma_0,\pm),(\gamma_\infty,\pm)$ in the notation for the moduli space) for orbits $\gamma_0$ and $\gamma_\infty$ with respect to the natural coordinates on $S^1\times\IR$. We assign to each curve an absolute homology class $A$ employing as usual a choice of spanning surfaces. In order to obtain the $S^1$-parametrized space $\IM^{S^1}_{r,A}((\gamma_0,\pm),(\gamma_\infty,\pm),\Gamma^+,\Gamma^-)$ we only divide out the $\IR$-component of the $S^1\times \IR$ group of automorphisms of the cylinder $\IP^1 \setminus (\{0,\infty\}$ and, as usual, the $\IR$-action coming from the cylindrical target $V\times\IR$ as well.\\

The compactification $\CM^{S^1}=\CM^{S^1}_{r,A}((\gamma_0,\pm),(\gamma_\infty,\pm),\Gamma^+,\Gamma^-)$ is obtained as usual by adding multifloor $S^1$-parametrized curves. In genus zero each floor has only one non-trivial connected component, all the others being trivial cylinders over Reeb orbits (possibly $S^1$-parametrized). If the $0$ and $\infty$ punctures determining the $S^1$-parametrization appear on the $k$-th and $l$-th floor of a $n$-floor curve, it means that all non-trivial curves appearing on the $m$-th floor are $S^1$-parametrized curves when $k\leq m\leq l$ and ordinary unparametrized curves when $m<k$ or $m>l$. As anticipated above, nodal curves should also be added to the picture, both in the usual codimension $\geq 2$ strata (when the node does not separate the $0$- and $\infty$-puncture, and as a new type of codimension $\geq1$ stratum, when a node separates the $0$- and $\infty$-punctures on a given floor. In the second case each component carries its own $S^1$-parametrization with respect to the node and the $0$- or $\infty$-puncture respectively and such two parametrizations have no matching condition at the common node (it is easy to convince one-self that this type of nodal degeneration of $S^1$-parametrized cylinders should happen in codimension $1$, just by counting the dimensions of the moduli of each of the two components).\\

As in the contact homology case, the space $\CM^{S^1}$ carries, besides the usual evaluation maps at marked points, orbits and asymptotic markers, extra evaluation maps at the punctures at $0$ and $\infty$ to the corresponding target simple Reeb orbits given by the special $S^1$-coordinate on the curve,
$$\ev_{\pm\infty,0}:\CM^{S^1}\to\bar\gamma_0 \simeq S^1$$
$$\ev_{\pm\infty,\infty}:\CM^{S^1}\to  \bar\gamma_\infty \simeq S^1.$$\\

We form the space $\CM^{S^1}_{r,n^+,n^-,A}((\gamma_0,\pm),(\gamma_\infty,\pm))$ by taking the union over $\Gamma^+$ and $\Gamma^-$ of all the spaces $\CM^{S^1}_{r,A}((\gamma_0,\pm),(\gamma_\infty,\pm),\Gamma^+,\Gamma^-)$ with $|\Gamma^-|=n^+$ and $|\Gamma^-|=n^-$.\\

We will now define a $(1,1)$-tensor on the Poisson super-space $\mathbf{V}_0$ underlying the Poisson subalgebra $\PP_0\subset \PP$ generated by $p$ and $q$-variables and even $t$-variables only. In other words, if $H^*(V)=H^*_{\text{even}}(V) \oplus H^*_{\text{odd}}(V)$, with $H^*_{\text{even}}(V) =<\theta_1,\ldots,\theta_N>$ and $H^*_{\text{even}}(V) =<\Theta_1,\ldots,\Theta_L>$, we denote by $t^\alpha$ the (even) formal variable associated to the class $\theta_\alpha$, $\alpha=1,\ldots,N$, and by $\tau^{\check{\alpha}}$ the (odd) formal variable associated to $\Theta_{\check{\alpha}}$, $\check{\alpha}=1,\ldots,L$. Then $\PP_0=\PP|_{\tau=0}$.  Correspondingly we define $\Ih^0:=\Ih|_{\tau=0}$ and $\Ih_{\check{\alpha},n}:=\frac{\del \tilde{\Ih}}{\del \tau^{\check{\alpha},n}}|_{\tau=0}$. Notice that the Hamiltonians $\Ih_{\check{\alpha},n}$ are always even elements in $\PP_0$.\\

We will denote globally by $v^A$ any of the coordinates $t^\alpha$, $p^{a}$ or $q^{a}$ (again, to avoid confusion, we have raised the indices of $p$ and $q$ variables, coherently with their interpretation as coordinates for $\mathbf{V}_0$). We will always use lower case roman indices (e.g. $v^a$) to refer indistinctly to a $p$ or $q$ variable, greek indices (e.g. $v^\alpha$) for $t$ variables and checked greek indices (e.g. $v^{\check{\alpha}}$) for $\tau$-variables. Also, for convenience, we let $v^{(\gamma,+)}:=p^\gamma$ and $v^{(\gamma,-)}:=q^\gamma$, so that the roman upper case indices $A$, $B$, etc. can take the values $\alpha$, $\beta$, etc. when the corresponding variable is a $t$-variable, or the values $(\gamma_1,\pm)$, $(\gamma_2,\pm)$, etc. or again simply $a$ and $b$ when the corresponding variable is a $p$ or $q$-variable. Notice also that, as opposed to what we did for the space $\mathbf{Q}$, the $t$-variables are treated here as genuine coordinates and not as parameters. In particular, the Poincar\'e metric $\eta$ will split into two blocks (one the transpose of the other) always pairing even with odd cohomology classes. We denote the matrix corresponding to each of such blocks by $\eta_{\alpha \check{\alpha}}=\eta(\theta_\alpha,\Theta_{\check{\alpha}})$.\\

Using such coordinates we define the (graded) symmetric bivector
$$\omega =\ \omega^{AB} \ \frac{\del}{\del v^A} \otimes \frac{\del}{\del v^B}  $$
where we sum over repeated indices, with
\begin{equation*}
\begin{split}
\omega^{(\gamma_1,\pm) (\gamma_2,\pm)} =& \sum  \frac{1}{r! n^- !} \int_{\CM^{S^1}_{r,n^+,n^-,A}((\gamma_1,\pm),(\gamma_2,\pm))} \bigwedge_{i=1}^r \ev_i^* t\
\bigwedge_{j=1}^{n^+}(ev^*_{+,j}p \wedge \ev^*_{+\infty,j}d\phi_{\bar\gamma^+_j})\\
&\bigwedge_{j=1}^{n^-}(ev^*_{-,j}q \wedge \ev^*_{-\infty,j}d\phi_{\bar\gamma^-_j})\
\wedge (\ev_{\pm\infty,0}^* d\phi_{\bar\gamma_1}) \wedge (\ev_{\mp\infty,\infty}^* d\phi_{\bar\gamma_2})
\end{split}
\end{equation*}
and
\begin{equation}\label{omegapart2}
\omega^{a \alpha} = \left. \Pi^{ab} \eta^{ \alpha \check{\mu}}\frac{\del \Ih_{\check{\mu},0}}{\del v^b}\right|_{\tau=0} = \omega^{\alpha a}
\end{equation}
and zero otherwise.\\

As in contact homology, from the index formula for the virtual dimension of the moduli space of SFT curves, we have
$$|\omega|=-2.$$

Algebraically, the SFT differential $d_0:\PP_0\to\PP_0$, defined as the vector field $X^0=X_{\Ih^0}=\{\Ih^0,\cdot\}:\PP_0\to\PP_0$ induces a differential $\mathcal{L}_{X^0}$ on the space of $(k,l)$-tensor fields $\mathcal{T}^{(k,l)}\mathbf{V}_0$ on the Poisson super-space $\mathbf{V}_0$ underlying $\PP_0$. The resulting homology, which we denote by $H_*(\mathcal{T}^{(k,l)}\mathbf{V}_0;\mathcal{L}_{X^0})$, is a module over $H_*(\PP_0,d_0)=H_*(\mathcal{T}^{(0,0)}\mathbf{V}_0;\mathcal{L}_{X^0})$ and is an invariant of the contact structure on $V$. In particular, for two different choices of form $\lambda^{\pm}$, cylindrical almost complex structure
$\Ju^{\pm}$ , representatives for the classes $[\theta_\alpha],[\Theta_{\check{\alpha}}]\in H^*(V)$ and $[d\phi_{\bar\gamma}]\in H^*(S^1)$, abstract polyfold perturbations and sequences of coherent collections of sections $(s^{\pm}_j)$, there exist an isomorphism
$$d\varphi^\pm: H(\mathcal{T}^{(k,l)}\mathbf{V}_0^+,\mathcal{L}_{X^{0+}}) \to H(\mathcal{T}^{(k,l)}\mathbf{V}_0^-,\mathcal{L}_{X^{0-}})$$
which is simply the lift to the tensor algebra of the isomorphism
$$\varphi^\pm: H_*(\PP_0^+;d_0^+) \to H_*(\PP_0^-;d_0^-),$$
constructed in \cite{EGH} by studying curves in the cobordims $W=\overrightarrow{V^+V^-}$ interpolating between the two different choices (see also the discussion on invariance for satellites there).\\

Moreover the descendant hamiltonians $h_{\check{\alpha},n}$ induce covariant (with respect to $d\varphi^\pm$) Hamiltonian vector fields $X_{\check{\alpha},n} \in H_*(\mathcal{T}^{(1,0)}\mathbf{V}_0;\mathcal{L}_{X^0})$, $\check{\alpha}=1,\ldots,L$, $n=0,1,2,\ldots$.\\

\begin{theorem}
$$\mathcal{L}_{X^0}\omega = 0$$
\end{theorem}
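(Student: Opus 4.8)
The plan is to mirror the Stokes-theorem argument that proved $\mathcal{L}_X N=0$ in contact homology, now applied to the moduli spaces $\CM^{S^1}_{r,n^+,n^-,A}((\gamma_1,\pm),(\gamma_2,\pm))$ defining the pure $p,q$-components of $\omega$, while treating the remaining components algebraically. First I would write out $(\mathcal{L}_{X^0}\omega)^{AB}$ in coordinates with the correct Koszul signs, obtaining the usual three summands $(X^0)^C\del_C\omega^{AB}-\omega^{CB}\del_C(X^0)^A-\omega^{AC}\del_C(X^0)^B$, and then split into cases according to the types of $A,B$.

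The components carrying a $t$-index are disposed of without geometry. For two $t$-indices the component vanishes trivially, since $\omega^{\alpha\beta}=0$ and $X^0$ has no $t$-component. For one $t$-index and one $p/q$-index, note that $\omega^{a\alpha}=\eta^{\alpha\check\mu}(X_{\check\mu,n})^a$, where $X_{\check\mu,n}=\Pi(d\Ih_{\check\mu,n})$ is the descendant Hamiltonian vector field; since the constant matrix $\eta^{\alpha\check\mu}$ is inert under $\mathcal{L}_{X^0}$, the mixed component reduces to $\eta^{\alpha\check\mu}[X^0,X_{\check\mu,n}]^a$, which vanishes by the commutation of the descendant Hamiltonians with the differential established earlier (equivalently $\{\Ih^0,\Ih_{\check\mu,n}\}=0$). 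This settles every component with a $t$-index purely algebraically.

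The geometric content is thus confined to the pure components $(\mathcal{L}_{X^0}\omega)^{ab}$ with $a,b\in\{(\gamma,\pm)\}$, where I analyze the codimension-one boundary of $\CM^{S^1}$. Two families of strata match two of the three Lie-derivative summands exactly as in the $N$-case: multi-floor breakings along a regular (non-special) connecting orbit reproduce $(X^0)^c\del_c\omega^{ab}$, while breakings along the distinguished punctures at $0$ or $\infty$ (through which the $S^1$-decoration passes) reproduce $\omega^{cb}\del_c(X^0)^a$ and $\omega^{ac}\del_c(X^0)^b$; as in contact homology one pulls back the diagonal $\frac{1}{\kappa_\gamma}(d\phi_{\bar\gamma}\otimes 1+1\otimes d\phi_{\bar\gamma})$ at the special connecting orbit to distribute the $d\phi$-constraint between the $X^0$-floor and the $\omega$-floor. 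Care is needed because the signs $\pm$ at the two special punctures exchange the roles of $p$ and $q$, and hence the Poisson signs — this is precisely what forces $\omega$ to be \emph{graded symmetric} rather than an endomorphism.

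The genuinely new strata, and the main obstacle, are the non-constant nodal degenerations in which the $S^1$-parametrized curve develops a node separating the $0$ and $\infty$ punctures, so that each nodal component inherits its own $S^1$-parametrization relative to the node; at such a node one pulls back the Poisson bivector $\Pi$ pairing the two halves, and the resulting contribution carries two contravariant indices, fitting the bivector structure of $\omega$ rather than the $(1,1)$-structure of $N$. I expect these nodal strata, together with the constant-bubble/descendant degeneration (the analogue of the term producing $C^\mu_{\alpha,n}$ in the proof of Theorem~\ref{omegarecursionCH}), to reassemble exactly into the cross-terms of the Lie derivative involving the mixed components $\omega^{a\alpha}$ and the $t$-dependence of $X^0$, so that the full boundary integral equals $(\mathcal{L}_{X^0}\omega)^{ab}$. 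Since the pulled-back forms are closed, Stokes' theorem makes the integral over $\partial\CM^{S^1}$ vanish, yielding $(\mathcal{L}_{X^0}\omega)^{ab}=0$ and hence $\mathcal{L}_{X^0}\omega=0$. The hard part will be the orientation and Koszul-sign bookkeeping across the multi-floor, nodal, and descendant strata, and checking that the nodal contributions combine consistently with the mixed components — this is where the non-constant nodal curves make the argument strictly harder than in the contact-homology case.
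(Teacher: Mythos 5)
Your proposal follows essentially the same route as the paper: the paper's (very compressed) proof is exactly your Stokes argument, with the codimension-one nodal strata in which the node separates the $0$ and $\infty$ punctures identified with the cross-terms $\omega^{A\mu}\frac{\del (X^0)^B}{\del t^\mu}$ and $\frac{\del (X^0)^A}{\del t^\mu}\,\omega^{\mu B}$ of the Lie derivative, and your purely algebraic disposal of the components carrying a $t$-index (via the chain-level identity $\{\Ih^0,\Ih_{\check{\mu},n}\}=0$) is a correct complement consistent with the algebraic definition of the mixed components $\omega^{a\alpha}$. Two small slips to note: at the separating node one pulls back the Poincar\'e dual of the diagonal of $V$, i.e. the $\eta$-pairing $\sum \eta^{\alpha\check{\beta}}\,\theta_\alpha\otimes\Theta_{\check{\beta}}$, which is what produces the $t$- and $\tau$-insertions matching the cross-terms ($\Pi$ enters only through the definition $\omega^{a\mu}=\Pi^{ab}\eta^{\mu\check{\nu}}\frac{\del \Ih_{\check{\nu},n}}{\del v^b}$, since $\Pi$ pairs $p$'s with $q$'s at multi-floor orbit breakings, not at nodes); and there is no constant-bubble stratum in this theorem, since the two special points are punctures asymptotic to Reeb orbits rather than marked points on a constrained line, and no descendant classes appear in $\omega$ or $X^0$ (the analogue of the $C^\mu_{\alpha,n}$-term is relevant only for the recursion of Theorem~\ref{omegarecursionSFT}, where the paper notes it is absorbed into the Lie derivative).
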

\begin{proof}
We proceed exactly as in the contact homology case, only keeping in mind that, this time, nodal configurations can appear in codimension $1$ when studying the moduli spaces, relevant for $N$, of curves with a doubly $S^1$-constrained line joining the two special $0$ and $\infty$ punctures. Indeed, for such extra boundary, containing nodal curves where the node separates the $0$ and $\infty$ puncture on the same level, the matching condition translates into a gluing condition for the domains at the node. This breaks the usual $S^1$-symmetry, forcing this phenomen to occur in codimension one (as a simple dimension check for the involved moduli spaces will show). Because of our definition of the $t$-components of $\omega$, equation (\ref{omegapart2}), this can be expressed as the term $ \omega^{A \mu} \frac{\del (X^0)^B}{\del t^\mu} \frac{\del}{\del v^A} \otimes \frac{\del}{\del v^B}$ and $ \frac{\del (X^0)^A}{\del t^\mu} \omega^{\mu B}  \frac{\del}{\del v^A} \otimes \frac{\del}{\del v^B}$  in the Lie derivative $\mathcal{L}_{X^0} \omega$, coherently with the fact that, in the full SFT picture, our formal Poisson manifold $V_0$ has coordinates $t^\alpha$, beside $p^a$ and $q^a$.
\end{proof}

\vspace{0.5cm}

\subsection{Descendant Hamiltonian vector fields and  $\omega$-recursion}

The following result is the analogue of Theorem \ref{omegarecursionCH} (and proved in completely similar way) for the rational SFT case, and shows how the non-equivariant bivector $\omega$ is related to the geometry of gravitational descendants and the combined knowledge of differential of the Hamiltonian $d\Ih_{\check{\alpha},n} \in H_*(\mathcal{T}^{(0,1)}\mathbf{V}_0,\mathcal{L}_{X^0})$ and of the graded symmetric bivector $\omega \in H_*(\mathcal{T}^{(2,0)}\mathbf{V}_0,\mathcal{L}_{X^0})$ allows to recover the descendant vector fields $X_{\alpha,n+1} \in H_*(\mathcal{T}^{(1,0)}\mathbf{V}_0,\mathcal{L}_{X^0})$, $n\geq0$. Notice however how, in general, this is not equivalent to recovering the Hamiltonians $h_{\alpha,n+1}$ themselves.\\


\begin{theorem}\label{omegarecursionSFT}
$$X_{\check{\alpha},n+1}=\Pi(\cdot,d \Ih_{\check{\alpha},n+1})=\omega(\cdot,d\Ih_{\check{\alpha},n})\qquad \in H_*(\mathcal{T}^{(1,0)}\mathbf{V}_0,\mathcal{L}_{X^0}) $$
\end{theorem}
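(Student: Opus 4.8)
The plan is to follow the proof of Theorem \ref{omegarecursionCH} almost verbatim, trading the Nijenhuis operator $N$ for the bivector $\omega$ and paying attention to the extra codimension-one strata built out of non-constant nodal curves. To begin with, I would note that the first equality, $X_{\check{\alpha},n+1}=\Pi(\cdot,d\Ih_{\check{\alpha},n+1})$, is nothing but the definition of the Hamiltonian vector field that the Poisson structure $\Pi$ associates to the descendant Hamiltonian $\Ih_{\check{\alpha},n+1}$; hence all the geometric content is carried by the recursion $\Pi(\cdot,d\Ih_{\check{\alpha},n+1})=\omega(\cdot,d\Ih_{\check{\alpha},n})$, which in components reads $\Pi^{AB}\,\del\Ih_{\check{\alpha},n+1}/\del v^{B}=\omega^{AB}\,\del\Ih_{\check{\alpha},n}/\del v^{B}$ in $H_*(\mathcal{T}^{(1,0)}\mathbf{V}_0,\mathcal{L}_{X^0})$.

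To prove this identity I would study the codimension-one boundary of the moduli space of $S^1$-parametrized rational SFT curves carrying the distinguished line joining the $0$ and $\infty$ punctures (exactly as in the definition of $\omega$), together with two further special marked points: one constrained to the unity class $1\in H^*(V)$ and one carrying the $n$-th gravitational descendant of $\Theta_{\check{\alpha}}$, with the two points lying on the distinguished line. Applying Stokes' theorem together with the compactness statement of \cite{BEHWZ}, the boundary decomposes into the same families of two-level configurations drawn in the contact homology case, plus genuinely new nodal families.

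The key step is then to match each family with a term of the desired identity. The degenerations in which the distinguished line detaches from the main curve reassemble into the contraction $\omega(\cdot,d\Ih_{\check{\alpha},n})$, the geometric components $\omega^{(\gamma_1,\pm)(\gamma_2,\pm)}$ accounting for the splittings along Reeb orbits and the off-diagonal components $\omega^{a\alpha}$ accounting for the $t$-derivative contributions. The special stratum in which the unity point slides along the line until it collides with the descendant point produces a constant sphere bubble decorated with a $\psi$-class, whose integral over $\CM_{0,r}$ shifts the descendant index; invoking the string equation of Theorem \ref{SDD} to rewrite the resulting $\del/\del t^{1,0}$-derivative then delivers exactly the term $\Pi(\cdot,d\Ih_{\check{\alpha},n+1})$ with its raised index. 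The remaining families organize into $\mathcal{L}_{X^0}$ of an auxiliary $(1,0)$-tensor and therefore vanish upon passing to homology. I expect the cleaner form of the SFT statement, free of the explicit $C^\mu$ correction present in Theorem \ref{omegarecursionCH}, to come precisely from the off-diagonal block $\omega^{a\alpha}=\Pi^{ab}\eta^{\alpha\check{\mu}}\,\del\Ih_{\check{\mu},n}/\del v^{b}$ of the bivector absorbing that correction.

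The main obstacle, absent from the contact homology argument, is the treatment of non-constant nodal degenerations: when a node separates the $0$ and $\infty$ punctures, each component keeps its own $S^1$-parametrization relative to the node, and I must verify that the contributions of these strata are exactly the ones packaged in $\omega^{a\alpha}$ rather than spurious terms. Keeping all signs consistent in the \emph{graded-symmetric} setting when contracting $\omega$ with the odd covector $d\Ih_{\check{\alpha},n}$, and checking that the nodal terms do not disturb the identification of the leftover families with $\mathcal{L}_{X^0}$ of a tensor, is where the bookkeeping will be most delicate.
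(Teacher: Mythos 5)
Your overall strategy is the paper's: the proof of this theorem is, as the paper itself says, run ``precisely in the same way'' as Theorem \ref{omegarecursionCH} --- Stokes' theorem on the codimension-one boundary of the moduli space of $S^1$-parametrized curves carrying the doubly $S^1$-constrained line together with the unity point and the descendant point, identification of the leftover strata with $\mathcal{L}_{X^0}$ of an auxiliary tensor, and the string equation of Theorem \ref{SDD} to perform the descendant shift; and you are right that the first equality $X_{\check{\alpha},n+1}=\Pi(\cdot,d\Ih_{\check{\alpha},n+1})$ is purely definitional. However, your accounting of why the $C^\mu$-correction of Theorem \ref{omegarecursionCH} disappears contradicts the paper's one substantive remark. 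You claim the correction is ``absorbed by the off-diagonal block $\omega^{a\alpha}=\Pi^{ab}\eta^{\alpha\check{\mu}}\,\del\Ih_{\check{\mu},n}/\del v^b$''; the paper instead states that the analogue of the term containing the constants $C^\mu_{\alpha,k}$, counting nodal curves, is \emph{absorbed into the Lie derivative}, which then vanishes upon passing to homology. The mechanism is available in full SFT, and was unavailable in contact homology, precisely because on $\mathbf{V}_0$ the $t$-variables are genuine coordinates rather than parameters: the Lie derivative $\mathcal{L}_{X^0}$ of a tensor now contains $\del/\del t^\mu$-terms, and it is these that eat the nodal strata --- exactly the same bookkeeping already used in the proof of $\mathcal{L}_{X^0}\omega=0$, where the strata with a node separating the $0$- and $\infty$-punctures are matched with $\omega^{A\mu}\,\del (X^0)^B/\del t^\mu$ and its mirror term. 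So the role you assign to $\omega^{a\alpha}$ (absorbing the bubble correction in the recursion identity) is not the role it plays in the paper's argument, and following your version one would have to check that the bubble stratum is not then counted twice, once in the contraction $\omega^{a\beta}\,\del\Ih_{\check{\alpha},n}/\del t^\beta$ and once among the strata you discard.

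A second, smaller misassignment: you say the constant-bubble stratum (unity point colliding with the descendant point along the line) ``shifts the descendant index'' and, after the string equation, ``delivers exactly the term $\Pi(\cdot,d\Ih_{\check{\alpha},n+1})$''. In the template proof of Theorem \ref{omegarecursionCH} the left-hand-side term arises from the two-floor stratum whose relevant floor is forced to be a \emph{trivial cylinder} (the unity point being unconstrained along the line kills every other configuration), not from the bubble; the bubble with $\int_{\CM_{0,r}}\psi^n$ produces only the correction term, and the index shift $n+1\to n$ is performed by the string equation acting on the $t^{1,0}$-differentiated terms of the Stokes identity. With the absorption mechanism corrected as above, the rest of your outline --- including your (correct) worry about signs in the graded-symmetric setting and about the strata where the node separates the two parametrizing punctures --- matches the intended proof.
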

\begin{proof}
The statement is proved precisely in the same way as for Theorem \ref{omegarecursionCH}. Notice only that the analogue of the term containing the constants $C^\mu_{\alpha,k}$, counting nodal curves, in this case is absorbed in the Lie derivative that vanishes in homology.
\end{proof}
\vspace{0.5cm}
Notice that the above recursion makes sense for $n=-1$ too if we define $\Ih_{\check{\alpha},-1}:=\eta_{\check{\alpha}\beta}t^\beta$. Then all of our sequences of Hamiltonians $\Ih_{\check{\alpha},n}$ satisfy to a recursion which starts from a Casimir at level $n=-1$. This allows to deduce commutativity $\{\Ih_{\check{\alpha},i},\Ih_{\check{\beta},j}\}=0$, which we know to hold on homology, simply from the recursion, since
\begin{equation*}
\begin{split}
\{\Ih_{\check{\alpha},i},\Ih_{\check{\beta},j}\}&=\omega(d\Ih_{\check{\alpha},i},d\Ih_{\check{\beta},j-1})=\\ &=-\{\Ih_{\check{\alpha},i+1},\Ih_{\check{\beta},j-1}\}=\\
&=\ldots=\\
&=(-1)^{j+1}\{\Ih_{\check{\alpha},i+j+1},\Ih_{\check{\beta},-1}\}=0.
\end{split}
\end{equation*}
\vspace{0.5cm}

\begin{example}\label{circle}
Consider again the case $V=S^1$ with $t=t^1\theta_1 + \tau^1 \Theta_1$, $\theta_1=1$ and $\Theta_1=d\varphi$ where $\varphi$ is the angular coordinate on $S^1$. Here, as in any other circle bundle over a symplectic manifold with even cohomology, $\Ih^0=0$ and everything happens at chian level. Even in the full rational SFT case, it is straightforward to compute $\omega$. We write $\pm k$ for the index $(k\gamma,\pm)$ associated to the $k$-th multiple of the positive or negative orbit $\gamma=V$ and we use the index $0$ to refer to the component along $t^1$ (or, in other words, $v^0=t^1$). From the dimension formula for the moduli
space of SFT-curves we see that the only nonzero components of $\omega$ correspond to
branched covers of the target $\mathbb{P}^1 \setminus \{0,\infty\}$ by an $S^1$-parametrized cylinder with an
extra puncture and another non-marked branch point (whose target $S^1$-coordinate
is fixed by constraining at both punctures the chosen $S^1$-parametrization of the
source cylinder). This way we immediately see that
$$\omega^{kl}=(k+l)v^{k+l},\hspace{1cm} k,l\in\mathbb{Z}$$
Applying $\omega$-recursion we can recover the $n$-th descendant Hamiltonian. Indeed, let us start with
$$\Ih_{1,0}= \frac{1}{2}\sum_k v^{-k}v^k.$$
Recursion tells us
\begin{equation*}
\begin{split}
\frac{\del \Ih_{1,1}}{\del v^j} \Pi^{jl} &= l \frac{\del \Ih_{1,1}}{\del v^{-l}}=  \sum_{k} (k+l) v^{-k} v^{k+l} = \\
&= \frac{1}{2} \left(\sum_k (k+l) v^{-k} v^{k+l} + \sum_{k'} (-k') v^{k'+l} v^{-k'}\right)=\\
&= \frac{l}{2} \sum_k v^{-k} v^{k+l}
\end{split}
\end{equation*}
from which we deduce
$$\Ih_{1,1}=\frac{1}{6} \sum v^{-k} v^{k+l} v^{-l}.$$
Notice that, actually, for $l=0$, the above equation is void, as we expected. The same procedure can be reiterated to find
$$\Ih_{1,n}= \frac{1}{n!} \sum_{k_1+\ldots+k_n=0} v^{k_1}\ldots v^{k_n}.$$
\end{example}

\begin{example}
It is actually possible to explicitly compute the operator $\omega$ for the stable Hamiltonian structure of the type described in example \ref{S1bundle}, where $V$ is the trivial $S^1$-bundle over a symplectic manifold $(M,\omega_M)$. The rational Symplectic Field Theory of such manifold $V=S^1\times M$ requires a Morse-Bott approach (as the Reeb orbits come in a family parametrized by $M$, every fiber $S^1$ being one such orbit) and is described in \cite{B} and \cite{EGH} for the case of contact manifolds. The trivial bundle case can be treated analogously and, in case the base symplectic manifold is K\"ahler, it even falls inside the relative Gromov-Witten theory approach (together with all other holomorphic $S^1$-bundles) as described in \cite{K}. For simplicity we will assume $M$ to have only even cohomology, $H^{\text{odd}}(M)=0$. Of course one has $H^*(V)=H^*(M)\oplus (H^*(M)\otimes d\varphi)$ where $\varphi$ is the fiber coordinate, while $H_2(M)=H_2(V)$. We then choose a basis $\Delta_1,\ldots,\Delta_N$ of $H^*(M)$ and denote by $\eta^{\alpha \beta}$ the Poincar\'e pairing on $H^*(M)$. We pull back $\Delta_1,\ldots,\Delta_N$ to $H^*(V)$ and complete them to a basis by adding odd classes $\Theta_1,\ldots,\Theta_N$, with $\Theta_k=\pi^*(\Delta_k)\otimes d\varphi$. By setting $t^\alpha=v^{\alpha,0}$, $\alpha=1,\ldots,N$ and using the unified notation $v^{\alpha,k}$, $\alpha=1,\ldots,N$, and either $k<0$ or $k>0$ for $p$ and $q$ variables associated (to cohomology classes of) the space $M$ of Reeb orbits, we can define generating functions
$$v^\alpha(x):=\sum_{k\in \mathbb{Z}} v^{\alpha,k} e^{i k x}, \ \alpha=1,\ldots,N$$
Let us denote by $\mathbf{f}_M=\mathbf{f}_M(t)$ the full descendant rational Gromov-Witten potential of $M$, where $t$ is short-hand notation for $t^{\alpha,n}$, $\alpha=1,\ldots,N$, $n=0,1,2,\dots$, the formal variables associated in Gromov-Witten theory to the elements in our basis for $H^*(M)$ and thier descendants. Let $f=f(v)=\mathbf{f}_M|_{v^{\alpha,n}=0,\ n>0}$ be the primary potential and $h_{\alpha,n}=\left.\frac{\del \mathbf{f}_M}{\del v^{\alpha,n}}\right|_{v^{\beta,j}=0,\ j>0}$ be the one-descendant components (often called $J$-function).\\

It is a result of Bourgeois \cite{B} that can be found also in \cite{EGH} that one can write the SFT-hamiltonians in terms of the GW-potential in the following way:
$$\Ih_{\check{\beta},n}=\frac{1}{2 \pi} \int_0^{2 \pi} h_{\beta,n}(t=v(x))dx.$$
Moreover notice that $\Ih^0=0$ because of the $S^1$-symmetry of the target.\\

We use now topological recursion relations (see e.g. \cite{G}) for the rational GW theory of $M$ to recover the explicit form of the operator $\omega$ for an $S^1$-bundle. Indeed, because of the form of the Poisson tensor, which can be written in terms of the generating functions as the formal distribution (see \cite{R2})
$$\{v^\alpha(x),v^\beta(y)\}=-i\delta ' (x-y)$$
we can write the $(\alpha,k)$-component of the hamiltonian vector field relative to $\Ih_{\check{\beta},n}$ as
\begin{equation*}
\begin{split}
X^{\alpha,k}_{\check{\beta},n}=\frac{dv^{\alpha,k}}{dt^{\check{\beta},n}}&=  -\frac{i}{2 \pi} \eta^{\alpha\mu} \int_0^{2 \pi}\left( \frac{d}{d x}\frac{\del h_{\beta,n}}{\del t^\mu} \right)e^{i k x}dx\\
&=-\frac{i}{2 \pi} \eta^{\alpha\mu} \int_0^{2 \pi}\frac{\del^2 h_{\beta,n}}{\del t^\mu \del t^\nu} v^\nu_x e^{i k x}dx
\end{split}
\end{equation*}
Using topological recursion relations for the rational Gromov-Witten theory of $M$
$$\frac{\del h_{\beta,n}}{\del t^\mu \del t^\nu}=\frac{\del h_{\beta,n-1}}{\del t^\epsilon} \eta^{\epsilon\delta} \frac{\del^3 \mathbf{f}}{\del t^\delta \del t^\mu \del t^\nu}$$
we get
\begin{equation*}
\begin{split}
X^{\alpha,k}_{\check{\beta},n}&=-\frac{i}{2 \pi} \eta^{\alpha\mu} \int_0^{2 \pi} \frac{\del^3 \mathbf{f}}{\del t^\mu \del t^\nu \del t^\delta} v^\nu_x \eta^{\delta\epsilon} e^{ikx}\frac{\del h_{\beta,n-1}}{\del t^\epsilon} dx\\
&=\sum_{l\in\mathbb{Z}}\left(-\frac{i}{2 \pi} \eta^{\alpha\mu} \int_0^{2 \pi} \frac{\del^3 \mathbf{f}}{\del t^\mu \del t^\nu \del t^\delta} v^\nu_x e^{i(k+l)x}dx \ \eta^{\delta\epsilon}\right)\left( \frac{1}{2 \pi}\int_0^{2 \pi} \frac{\del h_{\beta,n-1}}{\del t^\epsilon} e^{-ilx} dx\right)\\
&=\sum_{l\in\mathbb{Z}}\left(-\frac{i}{2 \pi} \eta^{\alpha\mu} \int_0^{2 \pi} \frac{\del^3 \mathbf{f}}{\del t^\mu \del t^\nu \del t^\delta} v^\nu_x e^{i(k+l)x}dx \ \eta^{\delta\epsilon}\right) \frac{\del \Ih_{\check{\beta},n-1}}{\del v^{\epsilon,l}}
\end{split}
\end{equation*}
from which we read the expression for the components of the bivector $\omega$
\begin{equation*}
\begin{split}
\omega^{(\alpha,k)(\epsilon,l)}&=-\frac{i}{2 \pi} \eta^{\alpha\mu} \int_0^{2 \pi} \frac{\del^3 \mathbf{f}}{\del t^\mu \del t^\nu \del t^\delta} v^\nu_x e^{i(k+l)x}dx \ \eta^{\delta\epsilon}=\\
&=-\frac{i}{2 \pi} \int_0^{2 \pi} c^{\alpha \epsilon}_\nu(t=v(x))\  v^\nu_x\  e^{i(k+l)x}dx 
\end{split}
\end{equation*}
where $ c^{\alpha \epsilon}_\nu= c^{\alpha \epsilon}_\nu(t^1,\ldots,t^N)$ are the structure functions of the quantum product on the cotangent bundle of quantum cohomology of $M$. In the formal loop space formalism (see e.g. \cite{DZ}) this last formula reads
$$\omega(\delta v^\alpha(x),\delta v^\epsilon(y))=c^{\alpha \epsilon}_\nu(v(x))\  v^\nu_x\ \delta(x-y)$$

Notice in particular how this formula reduces to what we computed in example \ref{circle} when $M=\text{pt}$ and $V=S^1$.
\end{example}

\vspace{1cm}

\end{document}